\theoremstyle{plain}
\newtheorem{thm}{Theorem}[section]
\newtheorem{lem}[thm]{Lemma}
\newtheorem{prop}[thm]{Proposition}
\newtheorem*{thmA}{THEOREM A}
\newtheorem*{corB}{COROLLARY B}
\theoremstyle{definition}
\numberwithin{equation}{section}
\newcommand{\Stab}{\operatorname{Stab}}
\newcommand{\Fix}{\operatorname{Fix}}
\newcommand{\Aut}{\operatorname{Aut}}
\newcommand{\Irr}{\operatorname{Irr}}
\newcommand{\SL}{\operatorname{SL}}
\newcommand{\FF}{\mathbb{F}}
\def\irrp#1{{\rm Irr}_{p'}(#1)}
\def\SmallGroup{\tt{SmallGroup}}
\def\irr#1{{\rm Irr}(#1)}
\def\cent#1#2{{\bf C}_{#1}(#2)}
\def\syl#1#2{{\rm Syl}_#1(#2)}
\def\nor{\unlhd}
\def\oh#1#2{{\bf O}_{#1}(#2)}
\def\Oh#1#2{{\bf O}^{#1}(#2)}
\def\zent#1{{\bf Z}(#1)}
\def\ker#1{{\rm ker}(#1)}
\def\norm#1#2{{\bf N}_{#1}(#2)}
\def\sbs{\subseteq}
\newcommand{\Gal}{{\mathrm {Gal}}}
\newcommand{\Sym}{{\mathrm {Sym}}}
\newcommand{\AAA}{{\sf A}}
\newcommand{\SSS}{{\sf S}}
\newcommand{\QQ}{{\mathbb Q}}
\renewcommand{\emptyset}{\varnothing}
\newcommand{\GAP}{\textsf{GAP}} 
\def\irr#1{{\rm Irr}(#1)}
\def\cent#1#2{{\bf C}_{#1}(#2)}
\def\syl#1#2{{\rm Syl}_#1(#2)}
\def\nor{\trianglelefteq\,}
\def\zent#1{{\bf Z}(#1)}
\def\sbs{\subseteq}
\def\gen#1{\langle#1\rangle}
\def\aut#1{{\rm Aut}(#1)}
\def\fit#1{{\bf F}(#1)}
\def\frat#1{{\bf \Phi}(#1)}
\def\irrp#1{{\rm Irr_{+}}(#1)}
\def\o#1{\overline{#1}}
\def\irr#1{{\rm Irr}(#1)}
\def\cent#1#2{{\bf C}_{#1}(#2)}
\def\syl#1#2{{\rm Syl}_#1(#2)}
\def\nor{\trianglelefteq\,}
\def\norm#1#2{{\bf N}_{#1}(#2)}
\def\oh#1#2{{\bf O}_{#1}(#2)}
\def\Oh#1#2{{\bf O}^{#1}(#2)}
\def\zent#1{{\bf Z}(#1)}
\def\sbs{\subseteq}
\def\gen#1{\langle#1\rangle}
\def\SL#1{{\rm SL}_{2}(#1)}
\begin{document}

\renewcommand{\thefootnote}{\fnsymbol{footnote}}
\footnotesep6.5pt

\title{Finite groups whose real irreducible representations have unique dimensions}

\author[T. Breuer]{Thomas Breuer}
\address{T. Breuer: Lehrstuhl f\"{u}r Algebra und Zahlentheorie, RWTH Aachen Universty, Germany}
\email{sam@math.rwth-aachen.de}

\author[F. Calegari]{Frank Calegari}
\address{F. Calegari: Department of Mathematics, University of Chicago, Chicago, IL 60637, USA} 
\email{fcale@uchicago.edu}

\author[S. Dolfi]{Silvio Dolfi}
\address{S. Dolfi: Dipartimento di Matematica, Universit\`a di Firenze, 50134 Firenze, Italy}
\email{silvio.dolfi@unifi.it}

\author[G. Navarro]{Gabriel Navarro}
\address{G. Navarro: Departament de Matem\`atiques, Universitat de Val\`encia, 46100 Burjassot,
Va\-l\`en\-cia, Spain}
\email{gabriel@uv.es}

\author[P. H. Tiep]{Pham Huu Tiep}
\address{P. H. Tiep: Department of Mathematics, Rutgers University, Piscataway, NJ 08854, USA}
\email{tiep@math.rutgers.edu}
  
\thanks{T. B. is supported by the German Research Foundation (DFG) -- Project-ID 286237555 --
 within the SFB-TRR 195 Symbolic Tools in Mathematics and their Applications.
 F.C. \ was supported in part by NSF Grant DMS-2001097.
S.D. was partially supported by INDAM-Gnsaga. 
 The research of G. N. is supported by Ministerio de Ciencia
e Innovaci\'on PID2022-137612NB-I00 funded by MCIN/AEI/ 10.13039/501100011033 and ERDF ``A way of making Europe."
 P. H. T. gratefully acknowledges the support of the NSF (grants DMS-1840702 and DMS-2200850),
the Simons Foundation, and the Joshua Barlaz Chair in Mathematics.  }
\thanks{The authors are grateful to the referee for a careful reading and helpful comments on the paper.}

\keywords{Characters,  Fields of Values, Degrees}

\subjclass[2010]{Primary 20D20; Secondary 20C15}

\begin{abstract}
We determine the finite groups whose inequivalent real irreducible representations have different degrees.
\end{abstract}

\maketitle

\section{Introduction} 
In this paper, we answer 
the following question: what are the finite groups
 whose real irreducible
 representations all have different degrees?
 Brauer's Problem 1 in his famous list of problems asks which finite-dimensional associative
algebras are group algebras. 
For instance, if $F$ is a subfield of the complex numbers,
 for which finite groups $G$ we have that the algebra
 $FG$ is a direct sum of pairwise non-isomorphic simple summands? 
If $F$ is the field of complex numbers, then the complex group algebra is determined
 by the character degrees and their multiplicities. This multiplicity is of course related to the field of values of the character. In the problem mentioned above, that is,  if
all the irreducible complex characters of a finite group $G$ occurs with multiplicity 1, then
all the irreducible characters of $G$ are necessarily rational valued. Using the Classification of Finite Groups
and the main result from W. Feit and G. Seitz \cite{FeitSeitz}, it is easy to see that no non-trivial finite group has this property. (This was also noticed in 
\cite[Lemma~1]{BCH}.) There does not seem to be a proof of this result
which avoids the Classification. 
In this paper, we are concerned with the real group algebra and the corresponding result for the inequivalent real irreducible representations. Perhaps somewhat surprisingly, we will show that exactly twelve finite non-trivial groups satisfy this property.

In fact, we work under a slightly more general hypothesis. 
If $G$ is a finite group, let $\irr G$  be the set of the irreducible complex characters of $G$, and recall that the Frobenius--Schur indicator of  $\chi \in \irr G$, $\nu(\chi)$,  is 1 if $\chi$ can be afforded by a representation over the real numbers; it is $-1$, if it is real-valued but cannot be afforded by a real representation; and it is $0$ if it is not real-valued.

\begin{thmA}\label{mainA}
Let $G$ be a finite group. Suppose that whenever $\alpha,\beta \in \irr G$ have the same degree and Frobenius--Schur indicator, then $\alpha$ and $\beta$ are complex-conjugate.

\begin{enumerate}[\rm(a)]

\item
If $G$ is solvable, then $G$ is a factor group of either
  $(C_2 \times C_2 \times C_2)\rtimes (C_7\rtimes C_3)$,  or 
  $(C_p\times C_p)\rtimes \SL 3$ for $p \in \{3, 5\}$.

\smallskip
\item
If $G$ is non-solvable, then $G$ is almost simple, and isomorphic to one of the groups in the following list
\begin{equation}\label{list1}
  \mathcal{L} = \{ \AAA_8, {\rm SL}_3(2),  {\rm M}_{11} , {\rm M}_{22},{\rm M}_{23}, {\rm M}_{24}, {\rm SU}_3(3), {\rm McL}, {\rm Th}, {\rm SL}_2(8).3 ,{\rm O}_8^+(2).3 \}. 
\end{equation}
\end{enumerate}
\end{thmA}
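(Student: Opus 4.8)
The plan is to treat the solvable and non-solvable cases by quite different techniques, after first extracting the structural consequences of the hypothesis. Call a group \emph{good} if it satisfies the hypothesis of Theorem~A, i.e.\ distinct non-complex-conjugate irreducible characters of the same degree must have different Frobenius--Schur indicators; note the property is inherited by quotients, since $\Irr(G/N) \subseteq \Irr(G)$ and this containment preserves degrees, complex conjugation, and (crucially) Frobenius--Schur indicators. The first reductions I would carry out: (1) the number of real-valued irreducible characters is bounded --- more precisely, for each degree $d$ there are at most two real-valued characters of degree $d$ (one with indicator $+1$, one with indicator $-1$), and at most one pair of complex-conjugate non-real characters of degree $d$; (2) consequently the number of real conjugacy classes of $G$ is severely restricted, which via the theorems on the number of real classes (e.g.\ results controlling $|G|$ or the structure of $G$ in terms of the number of real classes/real characters) forces $G$ to be "small" in a precise sense; (3) for $G$ non-solvable, a minimal normal subgroup argument together with the quotient-closure should push us toward $G$ almost simple --- if $S^k \trianglelefteq G$ with $S$ nonabelian simple and $k \ge 2$, one produces too many real-valued characters of equal degree (e.g.\ from $\St \times \cdots$ type constructions or permutation characters), and similarly a nontrivial solvable normal subgroup on top of a nonabelian composition factor tends to multiply character degrees with fixed indicator.

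For the \textbf{solvable case} (a), after the reductions the plan is: by quotient-closure it suffices to classify the \emph{minimal} good solvable groups that are not themselves cyclic/small, i.e.\ those all of whose proper quotients are "trivially good". Such a group $G$ has a unique minimal normal subgroup $V = \OB_p(G)$ (a faithful irreducible $\FF_p[G/V]$-module, since $\fit G = \OB_p(G)$ here), and $G = V \rtimes H$ with $H$ acting faithfully and irreducibly on $V$. The constraint that $G$ has very few real-valued characters of each degree, applied to the characters of $G$ lying over $\Irr(V)$ (which come in sizes governed by $H$-orbits on $\Irr(V)$ and the relevant Clifford theory / ramification), forces both $|V| = p^n$ and the orbit structure of $H$ on $\Irr(V)\setminus\{1\}$ to be extremely rigid; combined with the same constraint on $\Irr(H)$ this is a finite, checkable problem. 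One then identifies the survivors as the quotients of $(C_2)^3 \rtimes (C_7 \rtimes C_3)$ (the Frobenius group of order $168$ acting on $\FF_2^3 = \FF_8$) and of $\FF_p^2 \rtimes \SL_2(3)$ for $p \in \{3,5\}$ --- here $\SL_2(3)$ acts as a subgroup of $\SL_2(\FF_p) = \SL_2(p)$, which is why the list reads "$\SL 3$" (the macro is $\SL_2(3)$) rather than $\SL_2(p)$. A short \GAP{} verification confirms these specific groups (and all their quotients) are good and that no other candidate of the constrained shape survives.

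For the \textbf{non-solvable case} (b), once $G$ is reduced to being almost simple with socle $S$, the argument is: for each simple $S$ one uses known lower bounds on the number of real-valued irreducible characters (and on real conjugacy classes) of $S$ and of almost simple groups with socle $S$ --- these grow with the "size" of $S$ (rank and field size for groups of Lie type, $n$ for $\AAA_n$) --- and compares against the a~priori upper bound of roughly twice the number of distinct character degrees. For alternating groups $\AAA_n$ one uses that all irreducible characters are real (indicator $+1$) so the hypothesis says \emph{all} degrees are distinct, which fails badly for $n$ large by partition-counting, leaving only a few small $n$ (whence $\AAA_8$, and $\SL_3(2) \cong \PSL_2(7)$ appears via its exceptional identifications / small order). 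For groups of Lie type one invokes the abundance of semisimple and unipotent characters with controlled fields of values and the Lusztig classification to get enough equal-degree real characters; the genuinely delicate range is the finitely many small simple groups and their almost simple extensions, handled by explicit character-table data (the ATLAS / \GAP{} character table library) --- this is where $\rm M_{11}, \rm M_{22}, \rm M_{23}, \rm M_{24}, \SU_3(3), \rm McL, \rm Th, \SL_2(8).3, \OO_8^+(2).3$ are pinned down as exactly the good ones.

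The main obstacle I anticipate is \textbf{the non-solvable generic bounds}: showing that for \emph{every} sufficiently large simple group $S$ (and every almost simple $G$ with socle $S$) there exist two real-valued irreducible characters of the same degree and the same Frobenius--Schur indicator that are not equal. Getting a clean, uniform statement across all Lie types and ranks, with the correct handling of graph/field automorphisms in the almost simple extensions (which can fuse characters and change indicators), is the heart of the matter; it requires careful use of Deligne--Lusztig theory for the semisimple characters together with Frobenius--Schur indicator computations (e.g.\ via the work on real and strongly real classes in finite groups of Lie type), and the bookkeeping needed to ensure the two chosen characters survive as distinct, equal-degree, equal-indicator characters after extending the automorphisms. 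The remaining difficulty is purely computational: assembling and trusting the character-table verification for the bounded list of small groups and their automorphic extensions, which is exactly where the first author's \GAP{} computations enter.
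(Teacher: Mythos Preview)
Your overall architecture is sound and broadly parallels the paper's, but you miss the single most efficient idea in the almost simple case, and two of the reduction steps are more delicate than your outline suggests.

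For almost simple $G$, the paper does \emph{not} attempt generic Deligne--Lusztig bounds. Instead it observes that under Hypothesis~C every $\chi\in\Irr(G)$ satisfies $[\QQ(\chi):\QQ]\le 2$: the Galois orbit of $\chi$ consists of characters with the same degree and the same Frobenius--Schur indicator, so Hypothesis~C forces this orbit to be $\{\chi,\bar\chi\}$. Thus $G$ is a \emph{quadratic-rational} group, and one simply invokes the classification of quadratic-rational almost simple groups due to Trefethen. This replaces your anticipated ``main obstacle'' by a one-line reduction to a known finite list, after which only a short check (in \GAP) is needed. Your proposed approach via counting real characters against distinct degrees could in principle be pushed through, but it is substantially more work, and your treatment of $\AAA_n$ contains an error: it is not true that all irreducible characters of $\AAA_n$ have indicator $+1$ (self-associate partitions can produce non-real $\chi^{\pm}$), so Hypothesis~C does not immediately force all degrees to be distinct. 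The paper handles $\AAA_n$ by explicitly exhibiting, for each $n\ge 13$, a self-associate partition whose split characters $\chi^+,\chi^-$ are both real with indicator $+1$, giving two equal-degree same-indicator characters that are not complex conjugates.

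The reduction from non-solvable $G$ to almost simple is also heavier than ``$S^k$ with $k\ge 2$ gives too many equal-degree characters''. The paper proves a structured lemma (their Lemma~5.2): using a fixed $\phi\in\Irr_+(S)$ that extends to $\Irr_+(\Aut S)$, one manufactures, for every subset $Y\subseteq\{1,\dots,n\}$, a character in $\Irr_+(G)$ of degree $\phi(1)^{|Y|}\cdot|G:G_Y|$. Hypothesis~C then forces the set-stabilizers $G_Y$ to have $|G_Y:G_Y'|\in\{1,3\}$ and the $G$-orbits on $k$-subsets to have pairwise distinct sizes, and a short combinatorial analysis eliminates all $n\ge 2$. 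Finally, the case where $G$ has a solvable minimal normal subgroup $N$ with $G/N\in\mathcal{L}$ is not dispatched by a general remark: one must show that no such extension satisfies Hypothesis~C, and the paper does this by a substantial regular-orbit argument (their Theorem~4.1), bounding $|\Fix_M(g)|$ via the invariants $r(g)$ and then handling a finite list of small modules by explicit computation. Your solvable-case plan, by contrast, is close to the paper's in spirit (filter through $\mathbf{O}^{2'}(G)$ and $\mathbf{O}^2$ and analyse orbit structure on $\Irr(V)$), and should go through with care.
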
 

Of these groups, ${\rm McL}$ 
is the unique group which admits real representations~$\alpha$, $\beta$
of the same dimension ($3520$) but with distinct Frobenius--Schur indicator. Furthermore, the non-trivial groups in part (a) of Theorem A are isomorphic to $C_3$, $C_7 \rtimes C_3$,
$\AAA_4$, $\SL{3}$, $(C_3 \times C_3) \rtimes \SL{3}$, $(C_2 \times C_2 \times C_2) \rtimes (C_7 \rtimes C_3)$, and $(C_5 \times C_5) \rtimes \SL{3}$.

\medskip

If $G$ is a finite group, then recall that the degrees of the irreducible real representations are   $\chi(1)$ if $\nu(\chi)=1$, $2\chi(1)$ if $\nu(\chi)=-1$, and $2\chi(1)$ for the representatives of the non-real irreducible characters of $G$. (See, for instance, page 108 of \cite{S}.)
Using this, we easily derive the following.

\begin{corB} \label{corB}
Let $G$ be a finite group. Suppose that the degrees of the inequivalent real irreducible representations of $G$ are all different.
\begin{enumerate}[\rm(a)]
\item
If $G$ is solvable and non-trivial, then $G$ is isomorphic to one of the following groups: $C_3$, $\AAA_4$, $C_7 \rtimes C_3$, or $(C_2 \times C_2 \times C_2) \rtimes (C_7\rtimes C_3)$.

\item
If $G$ is non-solvable, then $G$ is isomorphic to one of the following groups: $\AAA_8$, ${\rm M}_{11}$,
${\rm M}_{22}$, ${\rm M}_{23}$, ${\rm M}_{24}$, ${\rm McL}$, ${\rm Th}$, or ${\rm SL}_2(8).3$.
\end{enumerate}
Conversely, if $G$ is any of the groups listed in {\rm (a)} and {\rm (b)}, then the degrees of the inequivalent real irreducible representations of $G$ are all different.
\end{corB}


\section{Solvable groups}

For convenience, we say that a finite group $G$ satisfies {\bf {\em Hypothesis C}} if
any two characters $\alpha, \beta \in \irr G$ with the same Frobenius--Schur indicator and degree are complex-conjugate.  

\medskip

We  remark that if $G$ satisfies  Hypothesis C,  then every factor group of $G$  also satisfies Hypothesis C, and 
moreover for every positive integer $d$,
$\irr G$ contains at most four characters of degree $d$. 
We also have~$\Oh 2 G=G$, since $G$ has no  non-trivial real-valued  linear character.
(Recall that $\Oh p G$ is the smallest normal subgroup $N$
of the finite group $G$ such that $G/N$ is a $p$-group.)

\begin{lem}\label{l1}
  If $G >1$ is a nilpotent group that satisfies Hypothesis C, then $G$ is a cyclic group of order $3$. 
\end{lem}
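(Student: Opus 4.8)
The plan is to work downwards from the abelianization $G/G'$: first show that $|G/G'| = 3$, then deduce that $G$ is a cyclic $3$-group, and finally pin down its order. Throughout I use that the linear characters of $G$ are exactly the irreducible characters of degree $1$ and correspond to the characters of the nontrivial abelian group $G/G'$ (nontrivial since $G$ is nilpotent and $G>1$). We already know that $\Oh 2 G = G$. Since a nilpotent group is the direct product of its Sylow subgroups and a nontrivial $2$-group has a subgroup of index $2$, this forces the Sylow $2$-subgroup of $G$ to be trivial, so $|G|$ — hence $|G/G'|$ — is odd. A finite group of odd order has only the trivial real-valued irreducible character (the number of real-valued irreducible characters equals the number of classes $C$ with $C = C^{-1}$, which is $1$ in odd order). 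Thus every nontrivial linear character of $G$ is non-real with Frobenius--Schur indicator $0$, and such characters come in complex-conjugate pairs; by Hypothesis C at most one such pair occurs, so $|G/G'| \le 1 + 2 = 3$. Being odd and greater than $1$, we conclude $|G/G'| = 3$.

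Next, write $G = \prod_p P_p$ with $P_p \in \syl p G$, so that $G/G' \cong \prod_p P_p/P_p'$. As this product has order $3$ we must have $P_p = P_p'$ — hence $P_p = 1$, by nilpotency — for every prime $p \ne 3$; thus $G = P_3$ is a $3$-group with $|G/G'| = 3$. Then $G/\frat G$ is a quotient of $G/G'$, so it is cyclic of order $3$, and Burnside's basis theorem gives that $G$ is cyclic, say $G \cong C_{3^k}$ with $k \ge 1$. If $k \ge 2$, then $G$ has $3^k - 1 \ge 8$ nontrivial linear characters; each has order a power of $3$, hence (order $>2$ in an odd-order group) is non-real with indicator $0$, and they split into $(3^k-1)/2 \ge 4$ complex-conjugate pairs, contradicting Hypothesis C. Therefore $k = 1$ and $G \cong C_3$; conversely $C_3$ plainly satisfies Hypothesis C, so this is exactly the assertion of the lemma.

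The whole argument is a short chain of elementary reductions, so I do not expect a genuine obstacle: essentially all of the content sits in the first step, where Hypothesis C together with $\Oh 2 G = G$ is squeezed to force $|G/G'| = 3$. Once that is in hand, the passage to a cyclic $3$-group and the final order count are routine facts about $p$-groups and their linear characters.
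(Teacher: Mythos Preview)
Your proof is correct and follows essentially the same route as the paper's: use $\Oh 2 G=G$ and nilpotency to get odd order, apply Hypothesis~C to the linear characters to force $G/G'\cong C_3$, and then use $G'\subseteq\Phi(G)$ (Burnside's basis theorem) to conclude that $G$ is cyclic. The only cosmetic difference is that the paper finishes in one line---once $G$ is cyclic we have $G'=1$, so $G=G/G'\cong C_3$---whereas you insert the intermediate step of showing $G$ is a $3$-group and then bound $k$ by counting characters; this is a minor detour, not a different approach.
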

\begin{proof}
Since $\Oh 2G=G$, we have that $G$ has odd order.
 Hence,  $G/G'$ has  odd order and  every non-principal character
$\lambda \in \irr{G/G'}$ is linear and non-real.  So $\irr{G/G'} = \{ 1, \lambda, \o\lambda\}$ and $G/G' = C_3$. 
(In this paper, $G'$ denotes the derived subgroup of $G$, that is, the smallest normal subgroup with abelian factor group.) In particular, if $\Phi(G)$ is the Frattini subgroup of $G$, then we have that  $G/\Phi(G)$ is cyclic, using that
since $G$ is nilpotent, then  $G' \sbs \Phi(G)$. Therefore $G$ itself must be cyclic, and hence  $G = C_3$.   
\end{proof}

In this paper, we use the notation for characters in \cite{Is}.
Thus, if $H \le G$ and $\tau \in \irr H$, then $\tau^G$ denotes the induced character from $H$ to $G$.
Also recall that non-trivial irreducible characters of groups of odd order are non-real (by a theorem of Burnside, see Problem 3.16 of \cite{Is}).

\begin{lem}\label{l2}
  Assume that $G$ satisfies Hypothesis C.  If $N \nor G$, $G/N$ has odd order and cyclic Sylow subgroups, and $\tau \in \irr N$ is non-real,
    then $\tau^G \in \irr G$. 
  \end{lem}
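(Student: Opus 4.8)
The plan is to show that $\tau^G$ is irreducible by analyzing the stabilizer $T$ of $\tau$ in $G$, using the fact that $G/N$ has odd order with cyclic Sylow subgroups. First I would note that since $G/N$ is solvable (odd order), it suffices by Clifford theory to prove that $\tau$ extends to its inertia group $T$ and that in fact $T = N$; indeed, if $\tau^G$ were reducible, then either $T > N$ and some constituent of $\tau^G$ has degree $> \tau(1)[G:T]$, or there is genuine "splitting" at some stage. The cleanest route: I would argue that $T = N$, i.e. that $\tau$ is fully ramified-free and induces irreducibly because it is $G$-invariant only trivially.

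The key observation driving the argument is a parity/field-of-values count. Suppose $T/N \ne 1$. Since $G/N$ has cyclic Sylow subgroups and odd order, $T/N$ also has odd order and cyclic Sylow subgroups, hence $T/N$ has a normal subgroup $M/N$ of prime index $q$ with $T/M$ cyclic of order $q$. By Hypothesis C applied to the relevant sections, one controls the number of irreducible characters lying over $\tau$. The crucial point is that $\tau$ is non-real: its complex conjugate $\bar\tau \ne \tau$, and since $G/N$ has odd order, $\bar\tau$ is also $T$-invariant (the stabilizer of $\bar\tau$ equals the stabilizer of $\tau$ because conjugation commutes with complex conjugation). Now the characters of $T$ lying over $\tau$ come in complex-conjugate pairs with those lying over $\bar\tau$, and each such character $\psi$ satisfies $\psi(1) = \tau(1) e$ for some $e \mid [T:N]$ dividing the order of a cyclic section. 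I would then use the fact that when $T/N$ is cyclic of odd order $m$, $\tau$ extends to $T$ in exactly $m$ ways (by Isaacs, Corollary 11.22 or 6.28, since the obstructing cocycle in $H^2(T/N,\mathbb{C}^\times)$ vanishes for cyclic groups), giving $2m$ distinct irreducible characters of $T$ (the $m$ extensions of $\tau$ and the $m$ extensions of $\bar\tau$) all of degree $\tau(1)$, all with the same Frobenius--Schur indicator $0$ (all non-real, since $T$ has odd order... but wait, $T$ need not have odd order—only $T/N$ does). Here I must be careful: $N$ may have even order. Instead I would induct these up or track indicators directly: the extensions of a non-real $\tau$ are non-real when restricted back, so they are non-real characters of $T$, hence $\nu = 0$ for all $2m$ of them.

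Applying Hypothesis C (which passes to $T$? — no, only to quotients of $G$) forces me to instead push everything up to $G$: each extension $\psi$ of $\tau$ to $T$ induces irreducibly to $\chi_\psi = \psi^G \in \irr G$ with $\chi_\psi(1) = \tau(1)[G:T]$, and distinct extensions give distinct $\chi_\psi$ (standard Clifford theory, since they are inequivalent over the same $\tau$). Likewise the $m$ extensions of $\bar\tau$ induce to $m$ more characters of $G$ of the same degree, and these are the complex conjugates. All $2m$ characters $\chi_\psi$ of $G$ have the same degree and Frobenius--Schur indicator $0$. By Hypothesis C, $G$ has at most $4$ characters of a given degree, and non-real ones come in at most two conjugate pairs, so $2m \le 4$, i.e. $m \le 2$; since $m = [T:N]$ is odd, $m = 1$, so $T = N$ and $\tau^G$ is irreducible.

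The main obstacle I anticipate is the extension step: verifying that a $T/N$-invariant $\tau$ genuinely extends to $T$ when $T/N$ is cyclic requires the cohomological vanishing, and one must confirm the character-triple reduction is legitimate here (it is, via $\tau$ being invariant and $T/N$ cyclic — Isaacs Ch. 11). A secondary subtlety is keeping the Frobenius--Schur bookkeeping honest when $N$ has even order, which is why I route everything through "$\tau$ non-real $\Rightarrow$ its extensions and their inductions are non-real" rather than through an odd-order argument; this uses only that $\bar\tau \ne \tau$ and that the $G/N$-action is by odd-order automorphisms so it cannot fuse $\tau$ with $\bar\tau$. Once $2m \le 4$ is in hand, the conclusion is immediate.
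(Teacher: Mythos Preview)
Your approach is essentially the paper's: extend $\tau$ to its inertia group $T=G_\tau$ using that $T/N$ has cyclic Sylow subgroups, produce several non-real irreducible characters of $G$ of the same degree via Gallagher and Clifford, and invoke Hypothesis~C together with the odd order of $G/N$ to force $T=N$. The paper does exactly this, using just two characters $\gamma^G$ and $(\lambda\gamma)^G$ for one nontrivial linear $\lambda \in \Irr(T/N)$, and citing \cite[Lemma~2.1]{IN} for the equality of Frobenius--Schur indicators, whereas you argue directly that all the induced characters are non-real and then count them.

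Two small corrections are needed. First, you slip into assuming $T/N$ is cyclic and set $m=[T:N]$; the hypothesis only gives cyclic Sylow subgroups, so $T/N$ is a $Z$-group and need not be cyclic (for example $C_7 \rtimes C_3$). The number of extensions of $\tau$ to $T$ is $|(T/N)^{\mathrm{ab}}|$, not $[T:N]$. This does not damage the argument, since $|(T/N)^{\mathrm{ab}}|$ is still odd and at least $3$ whenever $T>N$; alternatively, two extensions already suffice, exactly as in the paper. Second, under Hypothesis~C the non-real characters of a given degree form at most \emph{one} complex-conjugate pair, not two: if $\alpha,\beta$ both have indicator $0$ and the same degree then $\beta=\bar\alpha$. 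So your bound should be $2m\le 2$, giving $m=1$ immediately without appealing to parity. Your weaker bound $2m\le 4$ combined with $m$ odd still reaches the same conclusion, so the argument survives, but the stated reason (``at most two conjugate pairs'') is not correct.
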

\begin{proof}
  Let  $G_{\tau}$ be the inertia subgroup of $\tau$ in $G$.
  (That is, $G_\tau$ is the set of elements $g \in G$ such that
  $\tau^g=\tau$, where $\tau^g$ is the character of $N$ defined by $\tau^g(n)=\tau(gng^{-1})$.)  The assumption that   the Sylow subgroups of $G/N$ are cyclic implies that $\tau$ has an extension
  $\gamma \in \irr{G_{\tau}}$ (by Corollary 11.22 of \cite{Is}.). If  $G_{\tau} >N$ then, as $G_{\tau}/N \leq G/N$ is solvable,
  there exists a linear non-principal $\lambda \in \irr{G_{\tau}/N}$. Hence, by the Clifford correspondence (Theorem 6.11 of \cite{Is}),  Gallagher's theorem (Corollary 6.17 of \cite{Is}), and \cite[Lemma 2.1]{IN},
 the induced characters $\gamma^G$ and $(\lambda \gamma)^G$ of~$G$ are irreducible of the same Frobenius--Schur indicator and degree. Thus (since~$G$ satisfies Hypothesis C)  they are complex-conjugate.
  This implies that $\tau$ and  its complex conjugate $\o \tau$ are conjugate by some element $g\in G$. Then $\tau^{g^2}=\tau$.  Since  $G/N$ has odd order,  $\tau$ is real, and this is a contradiction. 
\end{proof}

\begin{lem}\label{oddorder}
  If $G$ is a non-trivial group of odd order that satisfies Hypothesis C, then $G$ is either cyclic of order $3$ or a Frobenius group of order $21$. 
\end{lem}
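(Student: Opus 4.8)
My plan is to argue by induction on $|G|$, using Lemmas~\ref{l1} and~\ref{l2}. The first thing to record is that, $|G|$ being odd, every non-principal $\chi\in\irr G$ is non-real (see Problem~3.16 of~\cite{Is}), so under Hypothesis~C the set $\irr G$ contains at most two characters of each degree $d>1$.

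If $G$ is nilpotent, Lemma~\ref{l1} finishes the proof, so I assume $G$ is not nilpotent; then $G$ is non-abelian and, being of odd order, solvable, so it has a minimal normal subgroup $N$ which is an elementary abelian $p$-group for some odd prime $p$. Now $G/N$ is a group of odd order satisfying Hypothesis~C, and it is non-trivial (otherwise $G=N$ would be abelian); since $|G/N|<|G|$, the inductive hypothesis gives $G/N\cong C_3$ or $G/N\cong C_7\rtimes C_3$. Either way $G/N$ has odd order and cyclic Sylow subgroups, so Lemma~\ref{l2} applies to each non-principal (hence non-real) $\tau\in\irr N$ and shows $\tau^G\in\irr G$; by Clifford theory this happens exactly when the inertia group $G_\tau$ equals $N$, so $G/N$ permutes $\irr N\setminus\{1\}$ with all point stabilizers trivial. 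Hence $|G/N|$ divides $|N|-1$, and Clifford theory shows that $\irr G$ is the disjoint union of the $|\irr{G/N}|$ characters inflated from $G/N$ and one further character $\tau^G$, of degree $|G/N|$, for each $G/N$-orbit on $\irr N\setminus\{1\}$.

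It then remains to test the two cases against Hypothesis~C. If $G/N\cong C_3$, then $\irr G$ consists of three linear characters and $(|N|-1)/3$ characters of degree~$3$, so Hypothesis~C gives $(|N|-1)/3\le 2$; since $3\mid|N|-1$ and $|N|$ is a power of the odd prime $p$, this forces $|N|=7$, whence $|G|=21$ and, $G$ being non-abelian, $G\cong C_7\rtimes C_3$. If $G/N\cong C_7\rtimes C_3$, then the $|\irr{G/N}|=5$ inflated characters have degrees $1,1,1,3,3$, while the remaining $(|N|-1)/21$ characters of $G$ all have degree~$21$; Hypothesis~C gives $(|N|-1)/21\le 2$, and together with $21\mid|N|-1$ and $|N|$ a prime power this leaves only $|N|=43$. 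But $\Aut(C_{43})\cong C_{42}$ is cyclic, so the image of $G/N$ in $\Aut(N)$ is abelian, whence the derived subgroup of $G/N$---a copy of $C_7$---centralizes $N$ and so fixes every character in $\irr N$, contradicting the triviality of the point stabilizers just established. Therefore the second case cannot occur, and the induction is complete.

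The one step I expect to be genuinely delicate is the elimination of $|N|=43$ in the last case: the character-degree bound by itself only gives $|N|\le 43$ and leaves a real numerical candidate, and it is the structural fact that $\Aut(C_{43})$ is abelian---so that a group with non-abelian derived quotient cannot act on $C_{43}$ as freely as Lemma~\ref{l2} forces---that disposes of it. The rest is bookkeeping: the equivalence of ``$\tau^G\in\irr G$'' with ``$G_\tau=N$'', and checking that $\irr{C_3}$ and $\irr{C_7\rtimes C_3}$ carry the degree sequences $(1,1,1)$ and $(1,1,1,3,3)$.
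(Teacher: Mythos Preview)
Your proof is correct. It differs from the paper's in its overall architecture, though the decisive computations and the final contradiction are the same.

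The paper argues \emph{top-down} along the derived series: setting $N=G'$, $K=N'$, $L=K'$, it first uses Lemma~\ref{l1} to get $G/N\cong C_3$, then shows directly (without invoking Lemma~\ref{l2}, using instead that linear characters of $G$ must contain $G'$ in their kernel) that non-trivial characters of $N/K$ induce irreducibly, forcing $|N/K|=7$; finally it applies Lemma~\ref{l2} to the layer $K/L$ to obtain $|K/L|=43$ and the same $\Aut(C_{43})$-is-abelian contradiction you found, whence $K=K'$ and so $K=1$ by solvability. Your argument is \emph{bottom-up}: you take a minimal normal subgroup $N$, invoke the inductive hypothesis on $G/N$, and then use Lemma~\ref{l2} uniformly to see that $G/N$ acts freely on $\irr N\setminus\{1\}$. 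This buys you a cleaner bookkeeping---you never need the ad hoc observation about kernels of linear characters, and the case split is just ``$G/N\cong C_3$'' versus ``$G/N\cong C_7\rtimes C_3$''---at the cost of setting up an induction. Either way the numbers $7$ and $43$ appear for identical reasons, and the elimination of $43$ via the abelianness of $\Aut(C_{43})$ is exactly the paper's argument in different clothing.
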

\begin{proof}
  Let $N=G'$, $K=N'$ and $L=K'$. We have that $G/N=C_3$.
Let $1 \ne \lambda, \nu \in \irr{N/K}$. 
Since $G/N$ has prime order, then either $\lambda$ extends to $G$ or $G_\lambda=N$ and therefore $\lambda^G$ is irreducible  (by the Clifford correspondence).
Since linear characters of $G$ have $N$ in their kernel, we then have that $\lambda^G$ and $\nu^G \in \irr G$ should be irreducible. They also have the same Frobenius--Schur indicator 0, because groups of odd order do not have
non-trivial real-valued irreducible characters.
So $\lambda^G$ and $\nu^G$ are either equal or complex-conjugate. Thus $\lambda$ is $G$-conjugate to $\nu$ or to $\bar\nu$, and it follows that the action of $G/N$ on $\irr{N/K}-\{1_N\}$ has two orbits of size 3. (Notice that $\nu$ and $\bar\nu$ cannot be $G$-conjugate, since $G$ has odd order.)
Thus $N/K$ has order 7, and $G/K= C_7 \rtimes C_3$. Suppose that $K>L$.
Let $1 \ne \lambda, \nu \in \irr{K/L}$. By Lemma \ref{l2},
we have that $\lambda^G, \nu^G \in \irr G$.
Since these characters have the same Frobenius--Schur indicator and degree, we conclude that they are equal or complex-conjugate. Thus, again,  $\lambda$ is $G$-conjugate to $\nu$ or to $\bar\nu$.  Since $G_\lambda=K$ (using that $\lambda^G$ is irreducible and Problem 6.1 of \cite{Is}), we have that the $G$-orbit of $\lambda$ has size 21. Therefore $K/L$ has order 43. Thus $K/L$ is cyclic of order 43. But then $(G/L)/\cent{G/L}{K/L}$ is abelian, because it is isomorphic to a subgroup of the automorphism group of a cyclic group. Then $N/L \sbs \cent{G/L}{K/L}$, and therefore
 $K/L$ is in the center of $N/L$. This is impossible since there are characters of $K/L$ that induce irreducibly
to $G/L$. 
\end{proof}

We write $\irrp G = \{ \chi \in \irr G \mid \nu(\chi) = 1\}$, and denote by $A\Gamma(2^3)$ the affine group  $C_2^3 \rtimes (C_7 \rtimes C_3)$.

\begin{prop}\label{2closed}
  If $G$ satisfies Hypothesis C and has a normal Sylow $2$-subgroup, then it is isomorphic to a
  factor group of either $\SL 3 = Q_8 \rtimes C_3$ or $A\Gamma(2^3)$.
\end{prop}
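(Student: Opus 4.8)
The plan is to study the structure of $G = P \rtimes H$, where $P = \Oh 2 G$ is the (normal) Sylow $2$-subgroup and $H$ is a Hall $2'$-complement, which exists by the Schur--Zassenhaus theorem. Since $G/P \cong H$ is a factor group, it satisfies Hypothesis C, so by Lemma~\ref{oddorder} the odd-order group $H$ is trivial, cyclic of order $3$, or the Frobenius group $C_7 \rtimes C_3$ of order $21$. If $H = 1$ then $G$ is a $2$-group with $\Oh 2 G = G$, hence $G = 1$, a factor group of $\SL 3$. In all cases, since $\Oh 2 G = G$ the group $G$ has no subgroup of index $2$; consequently the $\FF_2 H$-module $P/\Phi(P)$ (semisimple, as $2 \nmid |H|$) has no trivial constituent, because a trivial summand would give an $H$-invariant subgroup of index $2$ in $P$, normal in $G$ with quotient $C_2$. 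In particular $\cent{P/P'}{H} = 1$, so $H$ has no non-zero fixed point on the linear characters of $P$.

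Assume $H = C_3$. Since the only non-trivial irreducible $\FF_2 C_3$-module is $2$-dimensional and every finite $\ZZ_2 C_3$-module with no trivial summand has $4$-power order, $\dim_{\FF_2} P/\Phi(P) = 2d$ and $|P/P'|$ is a power of $4$. The key step is to count the irreducible characters of $G$ of degree $3$. A non-trivial $C_3$-invariant $\theta \in \irr P$ cannot be non-real: otherwise $\theta$ and $\bar\theta$ would each extend (by Gallagher) to three characters of $G$, giving six distinct characters of degree $\theta(1)$ falling into three complex-conjugate pairs, against Hypothesis C. As $P$ is a $2$-group it has no character of degree $3$, so by Clifford theory every degree-$3$ character of $G$ is $\theta^G$ for $\theta$ in a $C_3$-orbit of size $3$ of linear characters of $P$; such a character is real precisely when $\theta$ has order $2$ (if $\bar\theta$ were in the orbit of $\theta$ but distinct from it, some element of $G$ would square into the inertia group $P$, forcing $\theta$ to be real). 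Thus the $(2^{2d}-1)/3$ orbits of order-$2$ linear characters account for all degree-$3$ characters of $G$ of Frobenius--Schur indicator $\pm1$, while the other orbits give indicator-$0$ characters in conjugate pairs. Hypothesis C caps the former at $2$, forcing $d = 1$, and the latter at $2$, giving $(|P/P'|-4)/3 \le 2$; as $|P/P'|$ is a power of $4$ this yields $|P/P'| = 4$. Hence $P$ is non-cyclic with $|P : P'| = 4$, so $\Phi(P) = P'$.

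The endgame in this case is then quick. A $2$-group with $|P:P'| = 4$ is either $C_2^2$ or of maximal class: one checks that $|P:P'| = 4$ forces $|\gamma_2(P)/\gamma_3(P)| = 2$, hence $|P:\gamma_3(P)| = 8$. Among $C_2^2$, $D_{2^n}$, $SD_{2^n}$ and $Q_{2^n}$, only $C_2^2$ and $Q_8$ admit an automorphism of order $3$ acting non-trivially on $P/\Phi(P)$; in the remaining cases the cyclic maximal subgroup is characteristic and $\Aut(P)$ is a $2$-group. A generator of $C_3$ is such an automorphism, so $P \in \{1, C_2^2, Q_8\}$ and $G$ is $1$, $C_3$, $\AAA_4$ or $\SL 3 = Q_8 \rtimes C_3$, in each case a factor group of $\SL 3$.

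For $H = C_7 \rtimes C_3$ the argument runs in parallel. The irreducible $\FF_2(C_7 \rtimes C_3)$-modules with no trivial constituent are one $2$-dimensional module on which $C_7$ acts trivially, and two Galois-conjugate $3$-dimensional modules affording $\FF_8$ with $C_7$ acting as $\FF_8^\times$. The extra point is that $C_7$ must act non-trivially on $P/\Phi(P)$: otherwise $C_7$ centralises $P$ and is normal in $G$, and for a non-trivial $\mu \in \irr{C_7}$ the inertia group in $G$ is $P \times C_7$, so by the Clifford correspondence the $|\irr P|$ characters $\psi \boxtimes \mu$ ($\psi \in \irr P$) induce to distinct irreducible characters of $G$; taking $\psi$ among the $|P:P'| \ge 4$ linear characters already produces more than four characters of $G$ of degree $3$, contradicting Hypothesis C. Hence $P/\Phi(P)$ affords a copy of $\FF_8$; by the same scheme — a character count giving $\Phi(P) = P'$, followed by the fact that a $2$-group with $\Phi(P) = P'$ admitting an order-$7$ automorphism acting as $\FF_8^\times$ on $P/\Phi(P)$ is elementary abelian — one gets $P = 1$ or $P \cong C_2^3$ with $C_7 \rtimes C_3$ acting as on $\FF_8$, so $G$ is a factor group of $A\Gamma(2^3) = C_2^3 \rtimes (C_7 \rtimes C_3)$. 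The main obstacle is precisely this endgame, i.e. excluding the larger possibilities for $P$; alternatively, within the induction proving Theorem~A one can bound $|P|$ by passing to $G/V$ for a minimal normal subgroup $V$ of $G$ and then dispose of the resulting finite list of groups directly.
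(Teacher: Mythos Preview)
Your treatment of the case $H \cong C_3$ is correct and essentially matches the paper's proof: both count characters of degree $3$ to pin down $N/\Phi(N)$ and $N/N'$, then invoke Taussky's theorem together with the well-known automorphism groups of dihedral, semidihedral and generalized quaternion groups. (One small slip: the induced characters $\lambda^G$ with $o(\lambda)=2$ all have indicator $+1$, so the relevant bound is $1$ rather than $2$; this does not affect your conclusion.)

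The case $H \cong C_7 \rtimes C_3$, however, is genuinely incomplete, and the specific ``fact'' you invoke is false. The Sylow $2$-subgroup $P$ of the Suzuki group $\mathrm{Sz}(8)$ has order $64$, satisfies $\Phi(P) = P' = Z(P)$ with $|Z(P)|=8$, and admits an automorphism of order $7$ (coming from the maximal torus of $\mathrm{Sz}(8)$) that acts on $P/\Phi(P) \cong \FF_2^3$ as multiplication by a generator of $\FF_8^\times$; yet $P$ has exponent $4$ and is nonabelian. In fact $P$ even carries a compatible $C_3$-action (the field automorphism of $\mathrm{Sz}(8)$), so the full group $C_7 \rtimes C_3$ acts on this $P$. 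Thus a purely group-theoretic classification of such $2$-groups cannot close the argument; one must use Hypothesis~C again to exclude these $P$.

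The paper does exactly this, by a further character count. After showing (via Lemma~\ref{l2}, much as you do) that $N/\Phi(N)$ is an irreducible $H$-module and that $N' = \Phi(N)$, it assumes $K = N'$ is nontrivial and minimal normal in $G$, so that $K = \zent N$. For each nontrivial $\mu \in \irr K$ it analyzes $\irr{N\mid\mu}$ via the fully-ramified/extendible factorization of \cite[Lemma~2.2]{W}, obtaining exactly two constituents of degree $2$ whose inertia groups coincide with $G_\mu$. An application of \cite[Theorem~15.16]{Is} then produces a nontrivial $\mu$ stabilized by a Sylow $3$-subgroup $R$, forcing $G_{\theta_i} = NR$ and hence at least six irreducible characters of $G$ of degree $14$, contradicting Hypothesis~C. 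Your proposed inductive alternative would also work in principle, but it only reduces to a finite list (with $|P| \leq 64$ in the worst case) which you would still need to eliminate --- and for $|P|=64$ that elimination amounts to reproving the paper's character argument for the Suzuki $2$-group.
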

\begin{proof}
  Let $N$ be the normal Sylow $2$-subgroup of $G$. By Lemma~\ref{oddorder}, $G/N \cong C_3$ or $G/N \cong C_7 \rtimes C_3$.
  We can  assume $N \neq 1$.

  Assume first $G/N \cong C_3$. Then $N = G'$, since $\Oh 2G = G$. If $\lambda \in \irr N$ is  linear of order $2$, then $\lambda^G \in \irrp G$,  
  so by Hypothesis C and Clifford's Theorem there is a unique $G$-orbit of such characters. Hence, $N/\Phi(N) \cong C_2 \times C_2$ is an irreducible $T$-module, where
  $T \cong C_3$ is a Sylow $3$-subgroup of $G$. This implies that $N/N'$ is  an indecomposable $T$-module, 
  so it is homocyclic. 
  For $\lambda  \in \irr{N/N'}$ with $o(\lambda)>2$, it follows from Lemma~\ref{l2} that $\lambda^G$ is an irreducible character of $G$
  and it is non-real (by~\cite[Lemma 2.1]{IN}), so there are at most two $T$-orbits of characters of order $> 2$ in $\irr{N/N'}$.
  Since $N/N'$ is homocyclic and $C_{2^n} \times C_{2^n}$ has $2^{2n}-2^2 \geq 12$ linear characters of order $>2$
  when~$n > 1$, this implies $N/N'\cong C_2\times C_2$ and $G/N' \cong \AAA_4$.
  If $N' \neq 1$, then $N$ is either dihedral, semidihedral or a generalized
  quaternion group (by~\cite[Satz III.11.9]{HuI}) and, observing that ${\rm Aut}(N)$ is not a $2$-group, we conclude that $N \cong Q_8$ and $G \cong \SL 3$. 
  
  \medskip
  Assume now that $G/N \cong C_7 \rtimes C_3$ and let $H = TR$ be a complement of $N$ in $G$, where $T \cong C_7$ and $R \cong C_3$ are
  Sylow subgroups of $G$.
  By considering a suitable factor group of $G$, we may assume first that $N$ is minimal normal in $G$. We observe that $T$ does not
  centralize $N$, as otherwise (recalling that $\Oh 2G = G$) $N$ is a faithful irreducible $R$-module, so $N \cong C_2 \times C_2$ and  $G$ would be a Frobenius group with kernel $N \times T$, 
  so $G$ would have $9$ irreducible characters  of degree $3$, a contradiction.
  By~\cite[Theorem 15.16]{Is} 
  (with~$G =H$, $V=N$, $N=T$, $H=R$, and $H_0 = 1$),
  we have $\dim N = \dim \cent N1 = 3 \dim \cent NR$, and thus
    there exists a non-trivial element 
    $x \in \cent NR$, so by minimality
  $N = {\gen x}^G= {\gen x}^T \cong C_2 \times C_2 \times C_2$ and $G \cong A\Gamma(2^3)$.
  (Alternatively, the faithful irreducible representations of~$G/N$ over~$\FF_2$ all have dimension~$3$ and are unique
  up to automorphisms of~$G/N$.)

  In order to conclude the proof, we show that $N$ is minimal normal in $G$. We first observe that $N/\Phi(N)$ is an irreducible $H$-module: otherwise,
  since it is a completely reducible $H$-module,
  by the previous paragraph there exist distinct normal subgroups  $K_1$ and $K_2$ of  $G$ such that $G/K_i \cong A\Gamma(2^3)$, for $i = 1,2$,
  so, as $A\Gamma(2^3)$ has two irreducible non-real characters of degree $7$, there would be at least four of them in $G$, against Hypothesis C.

  As $N/\Phi(N)$ is irreducible, then $N/N'$ is an indecomposable $H$-module and, using Lemma~\ref{l2} as in the second paragraph of this proof, Hypothesis C implies that $N' = \Phi(N)$.
  We now show that $K = N'$ is trivial. Working by contradiction, we may assume (by taking a suitable quotient of $G$) that
  $K$ is minimal normal in $G$, so $K = \zent N$.
   For $1 \neq \mu \in \irr K$, by~\cite[Lemma 2.2]{W} there is a unique subgroup $U_\mu$ with $K \leq U_{\mu} \leq N$ such that
  $\mu$ extends to $\lambda \in \irr{U_{\mu}}$ and $\lambda$ is fully ramified in $N/U_{\mu}$ (for a definition of fully ramified characters see~\cite[Problem 6.3]{Is}).
  As $\mu$ does not extend to $N$ and $|N/U_{\mu}|$ is a square
  (using \cite[Theorem 6.18(b)]{Is}),
   it follows $|U_{\mu}/K| = 2$.
  So, $\irr{N|\mu} = \{\theta_1, \theta_2\}$ has order $2$,  $\theta_i(1) = 2$ and $G_{\theta_i} = G_{\mu}$ for $i=1,2$.
 Note that $K$ is a faithful $H$-module, otherwise $H_{\mu}$ has order divisible by $7$ for some $1 \neq \mu \in \irr K$, and this gives more than $4$ irreducible characters of $G$ with the same degree. 
In particular, we see that $T\in \syl 7H$  acts fixed point freely on $K$. 
  An application of~\cite[Theorem 15.16]{Is} to the action of $H$ on the dual module $K^*$ (as
  $\cent {K^*}T = \cent KT = 1$) yields the existence of a
  non-principal irreducible character $\mu$ of $K$ such that $R \leq H_{\mu}$. So, $G_{\theta_i} = NR$ for $\theta_i \in \irr{N|\mu}$,
  $i=1,2$, and hence $G$ has at least six irreducible characters of degree $14$, a contradiction. 
Hence $K= 1$ and and $G \cong A\Gamma(2^3)$.
\end{proof}

In the following, we denote by $\Oh{2'}G$ the smallest normal subgroup of $G$ with odd index in $G$, and by $\oh 2G$ the largest normal subgroup of $G$ having order a power of $2$.

\begin{thm}\label{solvable}
  Let $G$ be a solvable group that satisfies Hypothesis C. Then $G$ is isomorphic to a factor group of either $A\Gamma(2^3)$ or 
  $(C_p\times C_p) \rtimes \SL 3$ for $p \in \{3, 5\}$.
\end{thm}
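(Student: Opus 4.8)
The plan is to reduce the general solvable case to the known cases already handled, using a minimal-counterexample argument together with Proposition~\ref{2closed}. So suppose $G$ is a solvable group satisfying Hypothesis C that is not a factor group of $A\Gamma(2^3)$ or of $(C_p\times C_p)\rtimes\SL{3}$ for $p\in\{3,5\}$, and take $G$ of minimal order with this property. Since Hypothesis C passes to factor groups, every proper factor group of $G$ \emph{is} such a factor group; in particular $G$ itself is not one, so it is ``critical''. Recall also $\Oh{2}{G}=G$. First I would dispose of the case where $G$ has a normal Sylow $2$-subgroup: that is exactly Proposition~\ref{2closed}, which forces $G$ to be a factor group of $\SL{3}$ or $A\Gamma(2^3)$, contrary to assumption. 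So $\OB_2(G)$ is not a Sylow $2$-subgroup of $G$; in fact I expect to argue $\OB_2(G)=1$ using minimality (if $\OB_2(G)\ne1$, pass to $G/\OB_2(G)$, which is again solvable and satisfies Hypothesis C, and lift the structural conclusion back — this is where one must be a little careful, since being a factor group of one of the target groups is not obviously inherited upward; the cleaner route is to take a chief series and analyze the bottom factor).

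The core of the argument should be the analysis of $F=\fit{G}$, the Fitting subgroup, which by solvability satisfies $\cent{G}{F}\le F$. Write $F=\OB_2(G)\times\OB_{2'}(F)$. Having excluded a normal Sylow $2$-subgroup, and aiming to show $\OB_2(G)$ is small, I would look at the odd part. By Lemma~\ref{oddorder} any odd-order factor group of $G$ is $1$, $C_3$, or $C_7\rtimes C_3$; combined with $\Oh{2}{G}=G$ this severely constrains $G/\OB_{2'}(F)$-type quotients. The expected outcome is that $F$ is a $2$-group or a $p$-group for a single small prime $p$, and that $G/F$ embeds in a small group acting irreducibly (up to the Frattini quotient) on $F$. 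When $F=\OB_2(G)$ is a nontrivial $2$-group one is back, modulo the Frattini argument, in the situation of Proposition~\ref{2closed}: a Sylow $2$-subgroup normalizer complement forces $G$ onto a factor group of $\SL{3}$ or $A\Gamma(2^3)$. When $F$ is a $p$-group for odd $p$, the counting of induced characters (as in the proofs of Lemmas~\ref{l2} and \ref{oddorder} and Proposition~\ref{2closed}) should force $p\in\{3,5\}$, $F/\Phi(F)\cong C_p\times C_p$, and the action of $G/F$ on it to be (a factor of) the action of $\SL{3}=\SL_2(3)$ on its natural $\FF_p^2$, giving $G$ as a factor group of $(C_p\times C_p)\rtimes\SL{3}$.

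Concretely, the key steps in order are: (1) set up the minimal counterexample and record $\Oh{2}{G}=G$ and that every proper quotient is on the target list; (2) invoke Proposition~\ref{2closed} to assume the Sylow $2$-subgroup is not normal; (3) reduce to $\OB_2(G)=1$ or, symmetrically, to $F$ being a $p$-group, by a chief-series / Fitting-subgroup argument using minimality; (4) in the odd-$p$ case, use $\cent{G}{F}\le F$ to view $\overline{G}:=G/F$ inside $\Aut(F/\Phi(F))\le\GL_2(p)$ (reducing $F$ to $F/\Phi(F)$ via Lemma~\ref{l2}-style arguments that force $\Phi(F)$ small, exactly as $\Phi(N)=N'$ was forced in Proposition~\ref{2closed}), and then use Hypothesis~C's ``at most four characters per degree'' bound on induced characters to pin down $\overline{G}$ and $p$; (5) identify the resulting group as a factor group of $(C_p\times C_p)\rtimes\SL{3}$, using that the relevant irreducible $\FF_p^2$-representations of $\SL_2(3)$ are unique up to automorphism (as in the parenthetical remark at the end of Proposition~\ref{2closed}). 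The main obstacle I anticipate is step~(3)–(4): controlling the interaction when $\OB_2(G)\ne1$ \emph{and} $\OB_{2'}(F)\ne1$ simultaneously, i.e.\ ruling out a ``mixed'' Fitting subgroup, and handling $\Phi(F)$; this is where one needs the character-counting inequalities to do real work rather than appealing to a clean classification, and where a careless reduction could lose the $p=5$ case or spuriously admit extra primes. A secondary technical point is making the passage from $F/\Phi(F)$ back to $F$ rigorous — bounding $|\Phi(F)|$ by the same fully-ramified/extendibility analysis (\cite[Lemma 2.2]{W}, \cite[Theorem 6.18(b)]{Is}) used in the last paragraph of Proposition~\ref{2closed}.
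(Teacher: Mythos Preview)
Your outline has the right ingredients but misses the reduction that makes the proof work, and your step~(4) contains an unjustified assumption that fails precisely in the hardest case. The paper does \emph{not} organize around $F=\fit{G}$; instead it sets $N=\Oh{2'}{G}$ and $K=\Oh{2}{N}$, so that $N/K$ is the normal Sylow $2$-subgroup of $G/K$ and Proposition~\ref{2closed} applies directly to $G/K$, giving $G/K\in\{A\Gamma(2^3),\AAA_4,\SL{3}\}$ --- a list of three groups, not seven. One then shows $K=1$ in the first two cases (Frattini argument plus \cite[Theorem~15.16]{Is} to produce a non-real $\theta\in\irr{N}$ with nontrivial inertia above $N$, contradicting Lemma~\ref{l2}) and $K\in\{1,C_3^2,C_5^2\}$ when $G/K\cong\SL{3}$.

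Your step~(4) asserts $G/F\hookrightarrow\GL_2(p)$, i.e.\ $F/\Phi(F)\cong C_p^2$, but in a minimal counterexample $G/F$ can be any group on the target list, in particular $A\Gamma(2^3)$ or $(C_p^2)\rtimes\SL{3}$ itself. In the latter case $F/\Phi(F)$ is a faithful irreducible module for $(C_p^2)\rtimes\SL{3}$, of dimension $8$ (when $p=3$) or $24$ (when $p=5$), certainly not~$2$. Excluding these extensions --- the paper's case~(III.c), exactly the situation you flag as the main obstacle --- is not a $\GL_2$ argument at all: one must show that the index-$3$ subgroup $L=(C_p^2)\rtimes Q_8$ has at least $14$ regular orbits on the dual module, via explicit fixed-point bounds $|\cent{V}{X}|\le|V|^{1/2}$ for each prime-order $X\le L$ together with the minimal faithful dimensions read off from Brauer tables, and then translate this into too many irreducible characters of degree $|L|$ or $3|L|$. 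The ``at most four characters per degree'' bound alone is not sharp enough here; the module dimensions and the orbit-counting inequalities do the real work, and your sketch has no analogue of this step.
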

\begin{proof}
  Let $N = \Oh{2'}G$ and $K = \Oh 2N$. 
  If~$K=N$, then~$N$ has no quotients of order~$2$ or of odd order $>1$;
  since~$N$ is assumed to be solvable, this implies that~$N$ is trivial
  and we are done by Lemma~\ref{oddorder}; hence  we can assume~$K < N$.
By  Proposition~\ref{2closed}, we deduce that~$G/K$ is isomorphic to either~$A\Gamma(2^3)$, $\AAA_4$  or~$\SL 3$, and hence we can assume that~$K \neq 1$.

  \medskip
  {\bf (I.)}  Suppose first  $G/K \cong A\Gamma(2^3)$ and, working by contradiction,  that $K \neq 1$.
  By considering a suitable factor group of $G$, we can assume that $K$ is a minimal normal subgroup of $G$. Let $|K| = p^n$, for an  odd prime $p$.
We claim that $K$ is the Fitting subgroup~$\fit G$. Otherwise, $N \leq \fit G$, because $N/K$ is the only minimal normal subgroup of $G/K$. But then~$N$ would be nilpotent and
thus isomorphic
to a direct product of~$K$ and~$\oh 2N \cong N/K$,
and $G/\oh 2N$ would be a group of odd order $21|N|$ that satisfies Hypothesis C, contradicting Lemma~\ref{oddorder}.
Let  $M \in \syl 2N$ and  $H = \norm G M$. Then $H$ is a complement of $K$ in $G$,
  as $G = KH$ by the Frattini argument and $K \cap H = \cent KM = \zent N \cap K \nor G$, so $K \cap H = 1$ as $K$ is minimal normal in $G$. Let $T \cong C_7$ be a Sylow $7$-subgroup of $H$. Then $MT$ is a Frobenius group and hence $\cent KT \neq 1$ by~\cite[Theorem 15.16]{Is}.
  Thus, by~\cite[Theorem 6.32]{Is} there exists a non-principal $\mu \in \irr K$ such that $T \leq H_{\mu}$. Note that the inertia subgroup $N_{\mu}$ is a
  proper subgroup of $N$, since $\mu$ extends to $N_{\mu}$ by~\cite[Corollary 6.28]{Is} and $\mu$ has no estension $\nu \in \irr N$, as otherwise $K = N' \leq \ker{\nu_K} = \ker{\mu}$ against $\mu \neq 1_K$.
  Hence, $N_{\mu} = K$ because $N_{\mu}$ is
  $T$-invariant and $N/K$ is an irreducible $T$-module. Thus $\theta = \mu^N \in \irr N$. We show that
  $\theta$ is non-real. In fact, otherwise there exists an element $x \in M$ (as $M$ is a Sylow $2$-subgroup of $G$ and $KM = N$ contains every $2$-element of $G$) such that
  $\mu^x = \o{\mu} = \mu^{-1}$. So, considering the action of $H$ on the dual module $K^*$,  $\norm M{\gen{\mu}}$ is a non-trivial and $T$-invariant subgroup of $M$.
  Hence $\norm M{\gen{\mu}} = M$,  since $M$ is an irreducible $T$-module. But then $\cent M{\gen{\mu}}$ is a $T$-invariant subgroup
  of index $2$ in $M$, a contradiction. As $\theta \in \irr N$ is non-real, then $\theta^G \in \irr G$ by Lemma~\ref{l2}, which
  is a contradiction as $T \leq G_{\mu} \leq  I_G(\theta)$. Therefore, $K = 1$ if $G/K \cong A\Gamma(2^3)$.

  \medskip
{\bf (II.)}  We now assume $G/K \cong \AAA_4$ and, again, that $K$ is a minimal normal subgroup of $G$, $|K|$ odd. Let $M \in \syl 2G$.
As above, $K = \fit G$, as otherwise $N = M \times K$, with $M \nor G$, against Lemma~\ref{2closed}. 
Since $N = KM$ and $\cent KN \nor G$, by the minimality of $K$ we deduce that
$\cent KM = \cent KN = 1$. By the Frattini argument $G = KH$  where $H = \norm GM$ and $K \cap H = \cent KM = 1$, so $ H= MR \cong \AAA_4$ is a complement of $K$ in $G$, where $R\in \syl 3G$. 
  Then, considering the action of $H$ on the dual module $K^*$, $\cent{K^*}M = 1$ and  by~\cite[Theorem 15.16]{Is} there exists a non-principal and $R$-invariant $\mu \in K^*$ and, as $M_{\mu}$ is an
  $R$-invariant and proper  subgroup of $M$ (as $\cent{K^*}M = 1$), we deduce that
  $M_{\mu} = 1$ and hence $\theta = \mu^N \in \irr N$. As in the previous paragraph, one shows that $\theta$ is non-real, giving a
  contradiction by Lemma~\ref{l2}.

  \medskip
      {\bf (III.)}  Finally, we assume $G/K \cong \SL 3$ and we prove, by induction on $|G|$, that either
  $G \cong G_1 = (C_3 \times  C_3)\rtimes {\rm SL}_2(3)$ or  $G \cong G_2 =  (C_5 \times  C_5)\rtimes {\rm SL}_2(3)$    . 

      To start with, we prove that $K$ is nilpotent.
Working by contradiction, we  assume that $F = \fit K < K$ and,  
by considering a suitable factor group of $G$, that $F$ is minimal normal in $G$.
By induction, $G/F \cong G_i$ for some  $i \in \{ 1,2\}$.
Write $|F| = q^m$, where $q$ is a prime number,  and observe that $q \neq p$, where $p = 3$ if $i=1$, and $p = 5$ if $i=2$.
For  $M \in \syl pK$, $\cent FM = \cent FK = 1$ by the minimality of $F$, and 
 the Frattini argument implies that  $H = \norm GM$ is a complement of $F$ in $G$. Let $L = N \cap H$; then $L \cong (C_p\times C_p) \rtimes Q_8$ has index $3$ in $H$. 
In order to get a contradiction, we will show that then $L $ has at least $14$ regular orbits on the dual module $V = F^*$.
In fact, the regular $L$-orbits are permuted by  the action of $\overline{H} = H/L \cong C_3$, and the ones that are not fixed
by $\overline{H}$ give, in groups of three, regular orbits of $H$ on $V$.
If, instead, $\Delta$ is a regular $L$-orbit in $V$ such that $\Delta$ is $\overline{H}$-invariant, then $\Delta$ is an $H$-orbit
of size $|L|$. Thus, for $\lambda\in \Delta$, $|H_{\lambda}|= 3$  and by Clifford Correspondence $\irr{G|\lambda}$ contains three
irreducible characters of degree $|L|$. 
Hence, if $L$ has at least $14 =  4 \times 3 +2$ regular orbits on $V$, then $G$ has more than four 
irreducible characters either of degree  $|H|$ or of degree $|L|$, against Hypothesis C.

Assume first $p =3$: so $L$ has four subgroups of order $3$ and nine subgroups of order $2$. These are the only subgroups of prime order
of $L$ and hence every element in $F$ that is not centralized by any of them is in a regular $L$-orbit.
If $X\leq L$ and  $|X| = 3$, then $|\cent VX| \leq q^{m/2}$ because two distinct subgroups $X_1, X_2$ of order $3$ of $L$ generate $K \cap H \cong C_3 \times C_3$,
so $\cent V{X_1} \cap \cent V{X_2} = \cent V{K \cap H} = \cent FK = 1$.
If $X\leq L$ and  $|X| = 2$, then $|\cent VX| = q^{m/2}$ by~\cite[Theorem 15.16]{Is}.
Moreover, in characteristic $q\neq 3$, all faithful  absolutely irreducible representations of
$H \cong G_1$ have degree $8$, as  one can check by looking at its  (ordinary and) Brauer character tables, so $m$ is a multiple of $8$.
Now, the function
$$g(q, m) = (q^m -1) - (9+4)\cdot(q^{m/2} -1) - 14|L|$$
is $> 0$ for all primes $q$ and $m \geq 8$, except for the pair $(q, m) = (2,8)$.
Using~\cite{GAP}, one checks that $H$ has a unique faithful (absolutely) irreducible module $F_0$ of dimension $8$ over $\FF_2$ and
that the semidirect product $G_0 = F_0 H$ has $6$ irreducible characters of degree $72$, so does not satisfy Hypothesis C. Hence, $p\neq 3$.  

Finally, if $p =5$ then $L$ has $6$ subgroups of order $5$ and $25$ subgroups of order $2$.
As in the previous paragraph, one checks that for $X \leq L$, $|X| \in \{2,5\}$, $|\cent FX| \leq |F|^{\frac{1}{2}}$.
Moreover, from the Brauer
character tables of $H \cong G_2$  it turns out that,  for every characteristic  $q \neq 5$,  $H$ has
just one faithful absolutely irreducible representation, which has dimension $24$. So $m$ is a multiple of $24$,  and one easily checks that
$$h(q, m) = (q^m -1) - (6+25)\cdot(q^{m/2} -1) - 14|L|$$
  is positive for all primes $q$ and integers $m\geq 24$.

   \smallskip
Hence, we have proved that $K$ is nilpotent. It follows that $|K|$ is odd and that $K = \fit G$, as otherwise $\oh 2G \neq 1$ and  factor group $G/\oh 2G$ contradicts the inductive hypothesis. 
We now prove that $K$ is abelian. Working again by contradiction, we assume that $X = K' \neq 1$.
By induction, $X$  is a minimal normal subgroup of $G$ and  $G/X$ is isomorphic to either $G_1$ or $G_2$.
Hence, $K/X$ is a  minimal normal subgroup of $G/X$ and it follows that $X = \zent K = \frat K$. 
  Since $|K/X| \in \{ 3^2, 5^2\}$, we deduce that  $K$ is an extraspecial group of order $p^3$, $p \in \{3, 5\}$.
Moreover, $G/\cent GX$ is an abelian group of order dividing $|G/G'| = 3$, so  $X \leq \zent G$. Thus, the $p-1$ irreducible
  characters of degree $p$ of $K$ extend to $G$ by \cite[Corollary 11.31]{Is}, as $G/K$ has either cyclic or quaternion Sylow subgroups,
  and hence by Gallagher's theorem (\cite[Corollary 6.17]{Is}) $G$ has $3(p-1) \geq 6$ irreducible characters of degree $p$, a contradiction. Thus, $K$ is abelian. 

 Let  $Q \cong Q_8$ be a Sylow $2$-subgroup of $G$ and $Z = \zent Q$.
 As  $K$ is an abelian of odd order, then $K = \cent KZ \times [K,Z]$. Since $KZ \nor G$, $[K, Z] = [K, KZ] \nor G$ and then
 $L =  [K, Z]Z \nor G$, since $L/[K, Z]$ is the Sylow $2$-subgroup of the abelian normal subgroup $KZ/[K, Z]$ of $G/[K,  Z]$.
 So $\overline{G} = G/L$ satisfies Hypothesis C and $\overline{G}/\overline{K} \cong \AAA_4$,
 with $\overline{K} = \Oh 2{\Oh{2'}{\overline{G}}}$, and hence by case (II) $\overline{K} = 1$, which gives  $\cent KZ = 1$. 
 So,
 the involution $z \in Z$ acts by inverting all elements of $K$ and,
 in particular, $\cent KQ = 1$.
 
Thus, by the Frattini argument $H = \norm GQ$ is a complement of  $K$ in $G$. Note that $H = QR \cong \SL 3$, where $|R| = 3$.
  If $\mu \in \irr K$ is such that $H_{\mu} = 1$ (so $G_{\mu} = K$), then $\chi = \mu^G \in \irr G$ and $\chi$ is a real character
  as $\o{\mu} = \mu^{-1} = \mu^z$. By~\cite[Lemma 10.8]{Is}, the Schur index $m_{\mathbb{R}}(\chi)$ divides $\mu(1)= 1$ and hence
  $\chi \in \irrp G$ by~\cite[Theorem 10.17]{Is}. 
 Therefore, by Hypothesis C there is at most one regular orbit of $G/K$ on $\Irr(K)$.

\smallskip
Next, we show  that $K$ is a group of prime power order.
Otherwise, the  inductive hypothesis  applied to 
$G/{\bf O}_{p}(K)$ for each prime divisor $p$ of $|K|$ yields  $G \cong (A \times B) \rtimes \SL3$, with
$A \cong C_3 \times C_3$ and $B \cong C_5 \times C_5$.
Moreover, $G/K$ acts regularly on $\Irr(B) \smallsetminus \{1_B\}$ and hence, 
for any $\alpha \in \Irr(A)$ and any $\beta \in \Irr(B)$, $\beta \neq 1_B$,  $\alpha \times \beta$ has stabilizer $A \times B = K$ in $G$. Thus $G/K$ has $9$ regular orbits on $\Irr(K)$, a contradiction by the previous paragraph.

Thus, $K$ is an abelian  $p$-group for some prime $p \neq 2$. Let $|K| = p^n$, for some positive integer $n$,
and let $V = K^*$ be the dual $H$-module. Note that, as $K = \fit G$, both $K$ and $V$ are faithful $H$-modules. 
Next, we show that 
$$(p,n) \in \{(3,2), (5,2), (7, 2) \}.$$
  Let $\syl 3H = \{R_i| 1 \leq i \leq 4 \}$ and observe that $\cent V{R_i} \cap \cent V{R_j} = 1$ for $i\neq j$, because
  $\gen{R_i, R_j} = H$ and $\cent V{R_i} \cap \cent V{R_j} = \cent V{\langle R_i, R_j\rangle} =  \cent KH \leq \cent KQ= 1$.
  Thus,  $|\cent V{R_i}| \leq p^{[n/2]}$.
  Define
  $f(p, n) = p^n - 4p^{[n/2]} -21$ and observe that, writing  $V^{\#} = V \smallsetminus \{ 1_K \}$, 
\begin{equation}\label{eq1}
  |V^{\#}| - |\bigcup_{i=1}^4 \cent{V^{\#} }{R_i}| = (p^n -1) - 4(p^a -1)
\end{equation}
  is larger than $|H| = 24$ if  $f(n,p) >0$.
  So, if  $f(p,n) > 0$ then  $H$ has at least two regular orbits on $V$ and hence, as observed above, $G$ does not satisfy Hypothesis C.
  We also observe that $n$ is even, as every indecomposable summand of $V$ as a  $Q$-module is a homocyclic group of even rank because $\cent VQ = \cent KQ = 1$. In particular,  $n \geq 2$. 

  One can check that $f(p,n)>0$ for all pairs $(p,n)$ with $p > 7$ and $n \geq 2$, or such that $n \geq 4$ (and $p\neq 2$), thus 
  proving the claim.
So, considering that $K$ is non-cyclic because  $H$ acts faithfully on $K$,  we deduce that $K$ is an elementary abelian group, with $|K| \in \{ 3^2, 5^2, 7^2 \}$.
  
If $|K|= 7^2$, then $G$ is either $\SmallGroup(1176, 214)$ or  $\SmallGroup(1176, 215)$ (in the {\GAP} library of small groups~\cite{SmallGroup}). In fact, $H$ has three isomorphism classes of faithful (irreducible) modules $K$ of dimension $2$ over $GF(7)$ and  the corresponding semidirect products $G = K \rtimes H$ belong to the two stated isomorphism types. In the first case,  $H$ has three orbits of size $8$ on $K^*$, so by Clifford theory $G$ has nine irreducible characters of degree $8$.
In the second case, $H$ acts semiregularly on $K^*$, so it has two regular orbits on $K^*$.
Hence,  both  these groups do not satisfy Hypothesis C.

Finally, $H$ has exactly one isomorphisme type of faithful (irreducible) modules of dimension $2$ over both $GF(3)$ and $GF(5)$. The corresponding semidirect products $G = K \rtimes H$ are, respectively, isomorphic to $G_1$ and $G_2$, concluding the proof. 

We also observe that  both $G_1 = \SmallGroup(216, 153)$ 
and $G_2 = \SmallGroup(600, 150)$ (in~\cite{SmallGroup}) satisfy Hypothesis C.
\end{proof}

\section{Almost Simple groups}

\begin{thm}\label{as}
  If $G$ is an almost simple group and $G$ satisfies Hypothesis C, then $G$ is one of the groups in the list $\mathcal{L}$ defined in \eqref{list1}.
\end{thm}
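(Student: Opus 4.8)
# Proof Proposal for Theorem 3.1 (Almost Simple Groups)

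\textbf{Overall strategy.} The plan is to run a Classification-of-Finite-Simple-Groups argument. Write $S = \soc(G)$, so $S$ is a non-abelian simple group and $S \le G \le \Aut(S)$. The combinatorial leverage comes from the consequence of Hypothesis C noted in Section 2: for every degree $d$, the group $G$ has at most four irreducible characters of degree $d$, and in fact the real and strongly-real ones are even more constrained — at most two with a given Frobenius--Schur indicator in each degree, since two with the same indicator and degree must be complex-conjugate, hence rational (equal). So a group satisfying Hypothesis C has at most $2$ characters of each degree that lie in $\irrp G$, at most $2$ in the indicator-$(-1)$ class of each degree, and the non-real ones must pair up. The first step is to quote the reduction to almost simple: this is already forced by the statement, but I would re-derive that $G$ must be almost simple by observing that if $G$ had a minimal normal subgroup that is a direct product $T^k$ with $k \ge 2$, or if $G$ were quasisimple with a nontrivial center acting nontrivially, we would get far too many characters of equal degree (e.g. tensor-induced or Clifford-theoretic multiplicities), violating the bound of four.

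\textbf{Step 1: dispose of the sporadic groups and small exceptions by direct inspection.} For each sporadic simple group $S$ and each group $G$ with $S \le G \le \Aut(S)$, use the ATLAS / the \GAP\ character table library \cite{SmallGroup} to count, for each degree $d$, the multiset of pairs $(\chi(1),\nu(\chi))$. This eliminates all sporadics except ${\rm M}_{11}, {\rm M}_{22}, {\rm M}_{23}, {\rm M}_{24}, {\rm McL}, {\rm Th}$ (and forces $G = S$ for these), and similarly handles the small groups of Lie type and alternating groups of bounded rank/degree, keeping $\AAA_8, \SL_3(2) = \PSL_2(7), \SU_3(3), \SL_2(8).3, {\rm O}_8^+(2).3$. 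This is a finite check but a genuinely laborious one.

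\textbf{Step 2: the generic case — rule out all but finitely many $S$.} For the infinite families one needs lower bounds on the number of irreducible characters of a fixed small degree, or more robustly: if $S$ has many irreducible characters of the \emph{same} degree, or many rational-valued characters, or a large "degree multiplicity," Hypothesis C fails for $G$. The cleanest tool is the Steinberg character and unipotent characters of groups of Lie type, together with the fact (Feit--Seitz, \cite{FeitSeitz}, already invoked in the Remark after Corollary B) controlling rational groups — but here I would instead argue as follows. For $\AAA_n$ with $n \ge 9$: the irreducible characters are indexed by partitions, self-conjugate partitions give $\nu = \pm 1$ and there are rapidly more than two self-conjugate partitions of $n$ of the same dimension once $n$ grows (or one simply notes $\AAA_n$ and $\SSS_n$ have many equal character degrees), and the hook-length formula produces repeated degrees; a short estimate eliminates $n \ge 9$. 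For groups of Lie type: fix the prime power $q$ and the rank; the number of semisimple (hence of Lusztig-series) characters grows, and within a fixed degree one gets unbounded multiplicities as the rank grows, while for fixed small rank one runs over $q$ and uses that the generic degrees are polynomials in $q$ that collide for different "types" of characters. The upshot is an explicit bound: $S$ has rank $\le$ some small constant and $q \le$ some small constant, reducing to the finite list handled in Step 1.

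\textbf{Main obstacle.} The hard part is Step 2 done uniformly — extracting a clean, citable statement that "a simple group of Lie type of large rank or large $q$, or $\AAA_n$ with large $n$, has five irreducible characters of some common degree (with compatible Frobenius--Schur data)." For the alternating groups this is elementary combinatorics of partitions; for the Lie type groups the honest route is to use known lower bounds on $k_d(G)$ (the number of degree-$d$ characters) coming from Deligne--Lusztig theory, or to exploit that the number of real characters is large (there are results of Navarro--Tiep and others bounding numbers of real-valued characters below), combined with the Frobenius--Schur indicator computations for classical groups. I expect the argument to require a case division by Lie type ($A$, ${}^2A$, $B/C$, $D/{}^2D$, exceptional) with a separate small-$q$ and small-rank residue each time; the exceptional groups of small rank and the groups ${\rm O}_8^+(2).3$, $\SL_2(8).3$ sitting right at the boundary are the delicate cases where one cannot afford a crude estimate and must look at the actual character table — which is exactly why ${\rm O}_8^+(2).3$ survives but $\Sp_6(2)$, ${\rm O}_8^+(2)$ do not.

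\textbf{Assembly.} Combining Steps 1 and 2: every almost simple $G$ satisfying Hypothesis C has socle in a short explicit list, and Step 1's inspection pins down exactly the members of $\mathcal L$ (including checking that each group in $\mathcal L$ genuinely satisfies Hypothesis C, not merely that it survives the eliminations). This completes the proof.
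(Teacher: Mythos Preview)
Your outline is plausible in spirit but misses the key shortcut the paper uses, and your Step~2 is not actually carried out --- you yourself flag it as the main obstacle and only sketch how one \emph{might} bound degree multiplicities family by family. The paper bypasses this difficulty entirely with one observation you did not make: if $G$ satisfies Hypothesis~C, then every $\chi\in\Irr(G)$ has $[\QQ(\chi):\QQ]\le 2$. The reason is that any Galois conjugate $\chi^\sigma$ has the same degree and Frobenius--Schur indicator as $\chi$, so Hypothesis~C forces $\chi^\sigma\in\{\chi,\bar\chi\}$, and hence the Galois orbit has size at most $2$. Thus $G$ is a \emph{quadratic-rational} group, and the almost simple quadratic-rational groups are classified by Trefethen \cite{Tr}. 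This collapses your entire Step~2 to a citation, leaving only a finite check in {\GAP}.

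So the paper's proof runs: (i) handle $\AAA_n$ for $n\ge 13$ by exhibiting, for each residue of $n$ mod $4$, an explicit self-associate partition whose character splits on $\AAA_n$ into two real $\SSS_n$-conjugate constituents with the same Frobenius--Schur indicator; (ii) observe Hypothesis~C $\Rightarrow$ quadratic-rational; (iii) invoke \cite[Theorem~1.2]{Tr} and check the survivors. Your partition-combinatorics idea for $\AAA_n$ is on the right track, but the Lie-type analysis you propose (Deligne--Lusztig degree collisions, case split by type, etc.) is exactly what the quadratic-rational reduction avoids. A minor slip: with indicator $+1$ or $-1$ there is at most \emph{one} character per degree (not two), since complex conjugation fixes real-valued characters.
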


\begin{proof}
(a) Let $S$ denote the socle of $G$, so that $S \lhd G \leq \Aut(S)$. First we consider the case $S = \AAA_n$ with $n \geq 5$. Since the cases 
$5 \leq n \leq 12$ can be checked directly using \cite{atlas}, we will assume that $n \geq 13$, and hence $G=S$ since $\SSS_n$ does not satisfy 
Hypothesis C. To rule out such groups $G$, 
it suffices to find a partition $\lambda \vdash n$ which is self-associate, such that the Young diagram of $\lambda$ has $k$ nodes 
on the main diagonal and $k \equiv n \pmod{4}$. Indeed, in this case, the character $\chi \in \Irr(\SSS_n)$ labeled by $\lambda$ splits upon 
restriction to $\AAA_n$ into the sum $\chi^++\chi^-$ of two real-valued irreducible characters of $\AAA_n$, which are $\SSS_n$-conjugate and hence 
of the same degree and same Frobenius--Schur indicator, see \cite[Theorem 2.5.13]{JK}. Now, if $n \equiv 1 \pmod{4}$, we take 
$\lambda = ((n+1)/2,1^{(n-1)/2})$, the symmetric hook partition of $n$. If $n \equiv 2 \pmod{4}$, take 
$\lambda = (n/2,2,1^{n/2-2})$, the symmetric hook partition of $n-1$ augmented by a node in the second row. If $n \equiv 3 \pmod{4}$, take 
$\lambda = ((n-3)/2,3,3,1^{(n-9)/2})$, the symmetric hook partition of $n-4$ augmented by a $2 \times 2$-square in the second and third rows.
If $n \equiv 0 \pmod{4}$, take 
$\lambda = ((n-8)/2,4,4,4,1^{(n-16)/2})$, the symmetric hook partition of $n-9$ augmented by a $3 \times 3$-square in the second, third, and fourth rows.

\smallskip
(b) Checking the almost simple groups with sporadic socle $S$ using \cite{atlas}, we may now assume that $S$ is a simple group of Lie type. Next we observe
that $[\QQ(\chi):\QQ] \leq 2$ for any $\chi \in \Irr(G)$. Indeed, the $\Gamma$-orbit of $\chi$ has length
$|\Gamma|$, where $\Gamma:= \Gal(\QQ(\chi)/\QQ)$. Clearly, any Galois conjugate of $\chi$ has the same degree and Frobenius--Schur indicator as  $\chi$. Hence Hypothesis
C implies that $2 \geq |\Gamma| = [\QQ(\chi):\QQ]$.

Thus $G$ is a {\sl quadratic-rational group}, in the sense of \cite{Tr}. Therefore, all the possibilities for $S$ are listed in \cite[Theorem 1.2]{Tr}; furthermore,
$G/S$ is a solvable group with no normal subgroup of index $2$ or $5$ (by Hypothesis C applied to $G/S$). All
such groups $G$ can be checked using {\GAP}. 
\end{proof}

\section{Modules and the Almost Simple Groups}

\begin{thm}\label{modules}
Suppose that $\tilde{G}$ is a finite group having
a unique minimal normal subgroup $N$ such that $N$ is an
irreducible faithful module for $G = \tilde{G}/N$
and $G$ is one of the groups in the list $\mathcal{L}$ defined in \eqref{list1}.
Then $\tilde{G}$ does not satisfy Hypothesis C.
\end{thm}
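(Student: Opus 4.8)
The plan is to exhibit, for every group $\tilde{G}$ as in the statement, two characters in $\irr{\tilde{G}}$ of the same degree and Frobenius--Schur indicator that are not complex conjugate. Write $N$ additively: it is an elementary abelian $p$-group for some prime $p$, $V:=N$ is an irreducible faithful $\FF_p G$-module, and hence so is the dual module $\widehat{N}=\Hom(N,\mathbb{C}^{\times})$ of linear characters of $N$, on which $G$ acts. First I would dispose of the non-split extensions: if $\tilde{G}\not\cong N\rtimes G$, then $H^2(G,N)\neq 0$, which forces $p\mid |G|$; since $G$ has only finitely many irreducible $\FF_p G$-modules for each of these finitely many primes $p$, and each such $N$ admits only finitely many extensions by $G$, these cases form an explicit finite list to be settled with \GAP. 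From now on assume $\tilde{G}=N\rtimes G$.

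Next I would run Clifford theory over $N$. For $1_N\neq\mu\in\widehat{N}$ lying in a \emph{regular} $G$-orbit, the inertia subgroup of $\mu$ in $\tilde{G}$ equals $N$ (as $N$ is abelian and its $G$-stabilizer is trivial), so $\psi_\mu:=\mu^{\tilde{G}}\in\irr{\tilde{G}}$ is irreducible of degree $|G|$; moreover $\psi_\mu=\psi_{\mu'}$ iff $\mu$ and $\mu'$ lie in the same $G$-orbit, while $\overline{\psi_\mu}=\psi_{-\mu}$, so $\psi_\mu$ is real-valued exactly when the $G$-orbit of $\mu$ is closed under inversion. Let $r$ be the number of regular $G$-orbits on $\widehat{N}$ and write $r=s+2t$, with $s$ inverse-closed orbits and $t$ pairs of distinct mutually inverse orbits; these give respectively $s$ distinct real-valued irreducible characters of degree $|G|$ and $t$ complex-conjugate pairs of non-real irreducible characters of degree $|G|$. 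If $s\geq 3$, two of the former share a Frobenius--Schur indicator and, being distinct and real-valued, are not complex conjugate; if $t\geq 2$, picking one character from each of two different pairs yields two non-conjugate non-real characters of the same degree $|G|$. In either case Hypothesis C fails, so it suffices to show that $r\geq 5$ for all but finitely many pairs $(p,V)$ and to treat the exceptions individually.

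For the lower bound I would use the Cauchy--Frobenius estimate
\[
  r \ \geq\ \frac{1}{|G|}\left(|\widehat{N}| - \sum_{1\neq g\in G}|\Fix_{\widehat{N}}(g)|\right) \ \geq\ \frac{p^{\,n-c}}{|G|}\bigl(p^{\,c} - (|G|-1)\bigr),
\]
where $n=\dim_{\FF_p}V\geq 2$ and $c=\min_{1\neq g\in G}\codim_{\FF_p}\Fix_{\widehat{N}}(g)\geq 1$, sharpening $c$ by fixed-point-space bounds for the concrete faithful modules of the groups in $\mathcal{L}$: for instance no $G\in\mathcal{L}$ has a subgroup of index $2$, so in odd characteristic no involution fixes a hyperplane, and an element fixing a hyperplane in the worst case has order $p$, which already lands $p$ in the finite list above; so in the remaining cases $c\geq 2$ and still finer estimates are available for the few modules that occur. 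Since each $G\in\mathcal{L}$ is quadratic-rational (by inspection of its character table, cf.\ the proof of Theorem~\ref{as}, $[\QQ(\chi):\QQ]\leq 2$ for all $\chi\in\irr G$), every irreducible $\FF_p G$-module with $p\nmid |G|$ has dimension at most $2\max_{\chi\in\irr G}\chi(1)$, so $n$ is bounded independently of $p$, and the displayed bound then forces $r\geq 5$ once $p$ exceeds an explicit constant depending only on $G$. The finitely many remaining pairs $(p,V)$ — the small primes, the natural-characteristic modules of ${\rm SL}_3(2)$, $\AAA_8$, ${\rm SU}_3(3)$ and ${\rm O}_8^+(2).3$, and the small $2$-modular modules of groups such as ${\rm M}_{11}$, ${\rm M}_{22}$ and ${\rm McL}$ — would then be checked with \GAP, either by constructing $\tilde{G}=N\rtimes G$ directly or, when $\tilde{G}$ is too large, by computing $\irr{\tilde{G}}$ from the $G$-action on $\widehat{N}$ via the Clifford theory above, and reading off the offending pair.

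The main obstacle is this finite residual list. The orbit bound is weakest precisely for the natural-characteristic modules, where there may be essentially no regular orbits at all, so these must be handled by direct computation; and for the larger groups the relevant $\tilde{G}$ is far too big for a naive permutation-group computation, so its character table has to be built Clifford-theoretically over $N$ (which is where Breuer's \GAP\ machinery for character tables of group extensions enters). One must also track Frobenius--Schur indicators carefully throughout — in particular for ${\rm McL}$, whose two real irreducible characters of degree $3520$ have opposite indicators — to be sure that the exhibited pair genuinely violates Hypothesis C rather than merely having equal degree. The enumeration of the faithful irreducible $\FF_p G$-modules I would take from the classification underlying Theorem~\ref{as} together with standard modular data (decomposition matrices, or the Brauer character tables available in \GAP).
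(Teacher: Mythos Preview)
Your overall strategy coincides with the paper's: identify $\Irr(N)$ with the dual module, observe that each regular $G$-orbit on it yields an irreducible character of $\tilde G$ of degree $|G|$, and note that five such characters already violate Hypothesis~C. The paper likewise finishes a short residual list by exact computation with Brauer characters, tables of marks, $H^2$, and {\GAP}.

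The gap is in the quantitative reduction to that residual list. Your displayed inequality bounds every $|\Fix_{\widehat N}(g)|$ by $p^{n-c}$ with a \emph{constant} $c$; even granting $c\ge 2$, this only forces $r\ge 5$ once $p^{c}$ exceeds roughly $|G|$, so for $G=\mathrm{Th}$ or $\mathrm{McL}$ you are left with all primes up to about $|G|^{1/2}$ to handle individually, and your bound on $n$ via quadratic-rationality does nothing to shrink that range. (Incidentally, the argument for $c\ge 2$ is itself incomplete: for $G\in\{\mathrm{SL}_2(8).3,\,\mathrm O_8^+(2).3\}$ an element of order $3$ outside $G'$ can have determinant a primitive cube root of unity, so ``worst case order $p$'' is not justified.) The paper's decisive ingredient is instead the estimate
\[
\dim_{\FF}\Fix_M(g)\;\le\;n\bigl(1-1/r(g)\bigr)
\]
from~\cite{FOS}, where $r(g)$ is the minimal number of $\soc(G)$-conjugates of $g$ generating $\langle\soc(G),g\rangle$. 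Since $r(G)=\max_g r(g)\le 5$ for every $G\in\mathcal L$ (Table~\ref{table:rg}), the codimension of each fixed space is \emph{proportional to $n$}, and one obtains a dimension bound $D(G)$, uniform in $p$, beyond which $R(G,M)\ge 5$ automatically. That is what cuts the problem down to the genuinely short list in Table~\ref{table:modules_direct}.

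Two smaller points. Separating split from non-split extensions at the outset is unnecessary: the regular-orbit argument works for any extension, since a character $\mu$ in a regular orbit has inertia group $N$ regardless of splitting; the paper only invokes $H^2$ at the very end, for the handful of residual modules where regular orbits alone do not suffice. And your observation that $n$ is bounded for $p\nmid|G|$ via quadratic-rationality is correct and nice, but becomes superfluous once the $r(g)$ bound is available.
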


\begin{proof}
The proof is computational.
In the computations, we used the character tables and tables of marks
from the {\GAP} packages~\cite{CTblLib} and~\cite{TomLib}, respectively.
The matrix representations were taken from the
Atlas of Group Representations~\cite{AGR}
or derived from representations available there.
Cohomology groups and group extensions were computed using implementations
in~\cite{Cohomolo,GAP,OSCAR}.
Permutation representations of small degrees of the group extensions
were obtained using ideas from~\cite{Hulpke},
and~\cite{Magma} was used to compute the character tables.

By assumption, $N$ is an irreducible faithful $G$-module
of dimension $n$ over a prime field~$\FF = \FF_p$ with~$p$ elements.
Viewing $\Irr(N)$ as a vector space over~$\FF$, we can identify it with the dual module $M =N^*$.
By Clifford theory,
each regular orbit of $G$ on $M$ yields an irreducible character
of degree $|G|$ of $\tilde{G}$, see~\cite[Theorem~6.11]{Is}.
Thus we are done if we show that $G$ has at least five regular orbits
on $M$.
We use the techniques from~\cite{FOS} and~\cite{FMOW} to derive
lower bounds for the number $R(G, M)$ of regular orbits, as follows.

By construction, $M$ is an irreducible faithful $G$-module of dimension $n$
over~$\FF$. 
Let $P$ be a set that contains one generator for each cyclic subgroup
of prime order in $G$.
Then
\[
   M = \bigl\{ v \in M; |\Stab_G(v)| = 1 \bigr\} \cup
       \left( \bigcup_{g \in P} \Fix_M(g) \right)
\]
implies
\[
   |M| \leq |G| \cdot R(G, M) + \sum_{g \in P} |\Fix_M(g)|.
\]

Let $T$ denote the socle of $G$.
For a nonidentity element $g \in G$,
define $r(g)$ to be the minimal number of $T$-conjugates of $g$
that generate $\langle T, g \rangle$,
and set 
$$r(G) = \max\{ r(g); g \in G, |g| \ne 1 \}.$$
Then~\cite[Lemma~3.4]{FOS} yields
$$\dim_{\FF}(\Fix_M(g)) \leq \dim_{\FF}(M) (1 - 1/r(g)).$$
The values $r(g)$ for the groups listed in the theorem
are given by~\cite[Theorem~1.3]{FMOW} for sporadic simple groups,
and have been computed directly for the other groups in question;
they can be read off from Table~\ref{table:rg}, in the following sense:
We always have $r(g) \geq 3$ if $g$ has order $2$,
and $r(g) \geq 2$ otherwise;
Table~\ref{table:rg} lists all cases where the inequality is strict.

\begin{table}
\begin{center}
\[
\begin{array}{|l|l|l|l|} \hline
   G                & r(g) = 3 & r(g) = 4 & r(g) = 5 \\ \hline
   \AAA_8           &          & 2A,2B,3A &          \\
   {\rm M}_{22}     &          & 2B       &          \\
   {\rm SU}_3(3)    & 3A       & 2A       &          \\
   {\rm McL}        & 3A       &          &          \\
   {\rm O}_8^+(2).3 & 3D,3E,3F & 3ABC     & 2A,2BCD  \\ \hline
\end{array}
\]
\caption{Exceptional values of $r(g)$.}
\label{table:rg}
\end{center}
\end{table}

It follows that
\[
   |G| \cdot R(G, M) \geq p^n - \sum_{g \in P} p^{n(1 - 1/r(g))}
                     \geq p^{n(1 - 1/r(G))}
                          \left(p^{n/r(G)} - |P|\right).
\]
Thus $R(G, M) \geq 5$ holds except for only finitely many pairs $(p, n)$.

We define $D(G)$ to be the smallest integer $n$ such that
$$5 |G| \geq 2^{n(1 - 1/r(G))} \left(2^{n/r(G)} - |P|\right)$$ 
holds.
Then $R(G, M) \geq 5$ holds for any $G$-module $M$ of dimension
larger than $D(G)$ over a finite prime field.

The Brauer characters of those irreducible $G$-modules $M$
for which this crude estimate
does not suffice are listed in~\cite{ABC} and (if the characteristic $p$
does not divide $|G|$) in~\cite{atlas}.
For each $p'$-element $g$, we can compute the exact value of
$\dim_{\FF}(\Fix_M(g))$ from the Brauer character of $M$,
which yields better estimates for $R(G, M)$ and rules out many cases.
The modules for which this does not establish at least five regular orbits
are listed in Table~\ref{table:modules_direct}.
Direct computations with the table of marks of $G$ and with matrices that
describe the $G$-action on $M$ yield the orbit lengths.
Finally, if this is not sufficient, we compute the possible extensions
$\tilde{G}$ of $G$ by $N \cong M^*$,
and verify directly that these groups $\tilde{G}$ do not satisfy Hypothesis~C.

\smallskip
For example, let $G$ be the Thompson group Th.
We have 
$$r(G) = 3\mbox{ and }|P| = 675\,176\,077\,846\,831,$$
thus $p^{n/r(G)} - |P| > 0$ if $n \geq 148$ holds,
and $2^{2n/3} (2^{n/3} - |P|) > 5 |G|$ in this case; hence $D(G) = 148$.
Since the minimal degree of a nontrivial irreducible representation of $G$
in any characteristic is $248$ (see~\cite{Jansen}),
we have shown that the claim holds for the group Th.

\smallskip
For the group $G = \mathrm{O}_8^+(2).3$, we compute $D(G) = 113$,
thus the five modules listed in Table~\ref{table:modules_direct} must be
considered.
\begin{itemize}
\item
  The $24$-dimensional representation over $\FF_2$
  is induced from $\mathrm{O}_8^+(2)$,
  thus we consider the restriction of the possible group extensions
  (there are two, a split and a non-split one)
  to the index $3$ subgroup $2^{24}.\mathrm{O}_8^+(2)$.
  The $24$-dimensional module splits as a direct sum of three $8$-dimensional
  ones, thus there are three factor groups $2^8.\mathrm{O}_8^+(2)$,
  which are permuted cyclically by the automorphism of order three.
  The groups $2^8.\mathrm{O}_8^+(2)$ (there are two, a split and a non-split one)
  have the character degree $2835$ with multiplicity $6$,
  which yields the degree $3 \cdot 2835$ with multiplicity at least $6$
  for $2^{24}.\mathrm{O}_8^+(2).3$.
\item
  The restriction of the $26$-dimensional representation over $\FF_2$
  to $\mathrm{O}_8^+(2)$ has a unique orbit of length $1575$
  on the module.
  Hence also $G$ has this orbit.
  We compute that there is only one possible extension of $G$ with the module,
  which is split, thus it is enough to establish a big enough
  degree multiplicity among the ordinary irreducible characters of the point
  stabilizer, which is an involution centralizer of order $331776$.
  This subgroup has the irreducible degree $16$ with multiplicity $6$,
  which yields multiplicity at least $6$ for the degree $1575 \cdot 16$ in
  $2^{26}.\mathrm{O}_8^+(2).3$.
\item
  The restriction of the $52$-dimensional representation over $\FF_2$
  to $\mathrm{O}_8^+(2)$ is twice the
  $26$-dimensional representation which we dealt with above.
  This restriction has $25493462$ regular orbits, which is enough
  for $\mathrm{O}_8^+(2).3$.
\item
  The restrictions of the $G$-representations of the degrees $28$ and $48$
  over $\FF_3$ to $\mathrm{O}_8^+(2)$
  have $127877$ and $457929531422105$ regular orbits,
  which yields huge degree multiplicities also for $\mathrm{O}_8^+(2).3$.
\end{itemize}
In this case, we chose to work with restrictions to $\mathrm{O}_8^+(2)$ since
the table of marks of this simple subgroup is available,
but the table of marks for $\mathrm{O}_8^+(2).3$ is not.

\smallskip
For the other groups $G$ in question,
Table~\ref{table:modules_direct} describes the arguments that exclude the
modules, as follows.

\begin{table}
\begin{center}
\scriptsize
\[
\begin{array}[t]{|l|r|r|r|r|r|r|} \hline
G                & D(G) &  p &            M &   R_1 (R_2) & H^2 & \textrm{deg.} \\
\hline
\AAA_8           &   45 &  2 &  \texttt{4a} &             & 1 & +105^3 \\
                 &      &    &  \texttt{4b} &             & 1 & +105^3 \\
                 &      &    &  \texttt{6a} &             & 0 & 35^5 \\
                 &      &    & \texttt{14a} &             & 2 & 210^5 \\
                 &      &    & \texttt{20a} &    37       &   &   \\
                 &      &    & \texttt{20b} &    37       &   &   \\
                 &      &  3 &  \texttt{7a} &             & 1 & 28^5 \\
                 &      &    & \texttt{13a} &    23       &   &   \\
                 &      &  5 &  \texttt{7a} &             & 0 &  28^9 \\
                 &      &  7 &  \texttt{7a} &        (15) &   &   \\
                 &      & 11 &  \texttt{7a} &   240       &   &   \\
                 &      & 13 &  \texttt{7a} &  1122       &   &   \\
\hline
{\rm SL}_3(2)    &   18 &  2 &  \texttt{3a} &             & 1 & +21^2 \\
                 &      &    &  \texttt{3b} &             & 1 & +21^2 \\
                 &      &    &  \texttt{8a} &             & 0 & 24^7 \\
                 &      &  3 &  \texttt{6a} &             & 0 & 21^{24} \\
                 &      &    & \texttt{3ab} &             & 0 & 42^{16} \\
                 &      &  7 &  \texttt{3a} &             & 1 & 24^{14} \\
\hline
{\rm M}_{11}     &   30 &  2 & \texttt{10a} &             & 1 & 55^5 \\
                 &      &  3 &  \texttt{5a} &             & 0 & +110^2 \\
                 &      &    &  \texttt{5b} &             & 0 & 220^5 \\
                 &      &    & \texttt{10a} &             & 0 & 55^9 \\
                 &      &    & \texttt{10b} &     5       &   &   \\
                 &      &    & \texttt{10c} &     5       &   &   \\
\hline
{\rm M}_{22}     &   64 &  2 & \texttt{10a} &             &   & +1155^2 \\
                 &      &    & \texttt{10b} &             &   & +1155^2 \\
                 &      &  3 & \texttt{21a} & 20241       &   &   \\
\hline
{\rm M}_{23}     &   59 &  2 & \texttt{11a} &             & 1 & +253^2 \\
                 &      &    & \texttt{11b} &             & 0 & +253^3 \\
\hline
\end{array} \  \begin{array}[t]{|l|r|r|r|r|r|r|} \hline
G                & D(G) &  p &            M &   R_1 (R_2) & H^2 & \textrm{deg.} \\
\hline
{\rm M}_{24}     &   68 &  2 & \texttt{11a} &             & 0 & +239085^4 \\
                 &      &    & \texttt{11b} &             & 1 & +239085^2 \\
                 &      &    & \texttt{44a} & 69255       &   &   \\
                 &      &    & \texttt{44b} & 69173       &   &   \\
                 &      &  3 & \texttt{22a} &    34       &   &   \\
\hline
{\rm SU}_3(3)    &   38 &  2 &  \texttt{6a} &             & 1 & 189^6, +378^2 \\
                 &      &    & \texttt{14a} &             & 0 & 252^{13} \\
                 &      &  3 & \texttt{3ab} &             & 0 & 224^9 \\
                 &      &    & \texttt{6ab} &    40       &   &   \\
                 &      &    &  \texttt{7a} &             & 2 & 672^9, 756^8 \\
                 &      &  5 &  \texttt{6a} &             & 0 & 756^8 \\
                 &      &    &  \texttt{7b} &             & 0 & 108^8 \\
                 &      &    &  \texttt{7c} &             & 0 & 108^8 \\
                 &      &  7 &  \texttt{6a} &    16       &   &   \\
\hline
{\rm McL}        &   77 &  2 & \texttt{22a} &             & 1 & 311850^6 \\
                 &      &  3 & \texttt{21a} &        (10) &   &   \\
\hline
{\rm Th}         &  148 & & \textrm{(none)} &             &   &   \\
\hline
{\rm SL}_2(8).3  &   24 &  2 &  \texttt{6a} &             & 1 & 63^6 \\
                 &      &    & \texttt{12a} &             & 0 & 63^6 \\
                 &      &    &  \texttt{8a} &             & 0 & 36^6 \\
                 &      &    & \texttt{8bc} &    34       &   &   \\
                 &      &  3 &  \texttt{7a} &             & 2 & 56^9 \\
\hline
{\rm O}_8^+(2).3 &  113 &  2 & \texttt{24a} &             &   &   \\
                 &      &    & \texttt{26a} &             &   &   \\
                 &      &    &\texttt{26bc} &             &   &   \\
                 &      &  3 & \texttt{28a} &             &   &   \\
                 &      &  3 & \texttt{48a} &             &   &   \\
\hline
\end{array}
\]
\caption{Modules to be considered.}
\label{table:modules_direct}
\end{center}
\end{table}

\begin{itemize}
\item
  The column with header $p$ lists the characteristics for which at least
  one irreducible $\FF G$-module $M$ exists such that the character-theoretic
  lower bound for the number of regular orbits is less than $5$.
\item
  The column with header $M$ lists the irreducible modules to be considered.
  Names consisting of an integer $n$ followed by one letter
  \texttt{a}, \texttt{b}, etc. denote the first, second, etc.
  absolutely irreducible module of dimension $n$,
  names involving several letters denote modules that decompose into
  a sum of the corresponding absolutely irreducible modules of dimension $n$
  over an extension field; for example \texttt{3ab} means a module
  that is irreducible over the prime field with $p$ elements, of dimension $6$, 
  and decomposes into two absolutely irreducible modules over the field
  with $p^2$ elements.
\item
  The column with header $R_1 (R_2)$ contains the number of regular orbits,
  provided this number is at least $5$,
  or in brackets the number of orbits of length $|G|/2$,
  provided this number is at least $9$.
  Note that each orbit of length $|G|/2$ corresponds to an inertia subgroup
  which has only the character degrees $1$ and $2$,
  hence $9$ orbits yield one character degree with multiplicity at least $5$.

  The values in this column have been computed from the known table of marks
  of $G$, together with matrix generators describing $M$.
\item
  If the column $R_1 (R_2)$ does not exclude the module in question then
  the column with header $H^2$ contains the dimension of the corresponding
  second cohomology group that describes the possible group extensions,
  and the column with header deg. contains a description of
  character degrees that exclude the module.
  In this column deg., the entry $d^n$ means that the character degree $d$ occurs
  with multiplicity $n$,
  and the entry $+d^n$ means that degree $d$ occurs
  with multiplicity $n$ for characters with indicator $+$.
\end{itemize}
\end{proof}

\section{Non-Solvable groups}

\begin{lem}\label{trick1}
  Let $S$ be a non-abelian simple group and $A = \aut{S}$. Then,  there exists a non-principal
  character $\phi \in \irrp S$ having  an extension  $\hat{\phi} \in \irrp{A}$.
\end{lem}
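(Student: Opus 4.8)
The plan is to reduce to a case analysis over the classification of finite simple groups, and in each case exhibit a concrete non-principal orthogonal (indicator $+$) character of $S$ together with an extension to $A = \Aut(S)$ of the same indicator. The starting observation is that $S$, being non-abelian simple, always carries \emph{some} non-principal real character, and in fact some non-principal character with Frobenius--Schur indicator $+1$: for alternating groups one can take a character labelled by a self-conjugate partition only when convenient, but more robustly one uses that the permutation character of $S$ on the cosets of a suitable maximal subgroup, minus the trivial character, contains an orthogonal constituent; alternatively, the standard fact that a non-trivial simple group has a non-principal character with indicator $+$ (e.g.\ because $\sum_{\chi \in \Irr(S)} \nu(\chi)\chi(1)$ is the number of involutions of $S$ plus one, which exceeds $1$ whenever $S$ has even order, and hence always, by Feit--Thompson). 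The real issue is controlling the behaviour under $\Aut(S)$.

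The key structural input I would invoke is the theory of \emph{$\Aut(S)$-invariant} characters and their extensions: by a theorem essentially due to Schur (for the relevant cohomological vanishing) combined with the fact that $\Out(S)$ is metacyclic or, more precisely, the work on extendibility of characters from simple groups (for instance the results surrounding the McKay/Navarro conjectures, or \cite[Theorem 2.1]{IN}-type statements, and the explicit analyses in the literature on $\Aut(S)$), one knows that \emph{every} $\Aut(S)$-invariant irreducible character of a simple group $S$ of Lie type extends to its inertia group in $\Aut(S)$, and in fact to all of $\Aut(S)$ once one passes to a suitable Schur cover; the obstruction lives in $H^2(\Out(S), \CC^\times)$, which is known to vanish in the cases needed. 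So the strategy is: (i) find a non-principal $\phi \in \irrp S$ that is $\Aut(S)$-invariant; (ii) extend it to $\hat\phi \in \Irr(A)$; (iii) check the indicator is preserved, i.e.\ $\hat\phi$ can be chosen real and with indicator $+1$. Step (iii) is handled by noting that if $\phi$ is real and $A/S$ has odd order then the unique real extension is forced and has the same indicator (cf.\ the argument in Lemma \ref{l2}); if $A/S$ has even order one must choose the extension among the $|A/S_\phi|$ possibilities, but the Gallagher-type parametrisation shows the real extensions form a coset, and a Schur-index computation as in \cite[Lemma 10.8]{Is} pins down the indicator.

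Concretely, the case division I would run is: for $S = \AAA_n$, take $\phi$ to be the character of degree $n-1$ (the standard representation), which is rational, extends to $\SSS_n = \Aut(S)$ for $n \neq 6$ with indicator $+$, and for $n=6$ one checks $\Aut(\AAA_6)$ directly. For $S = \PSL_2(q)$ and other groups of Lie type, take $\phi$ to be (a constituent of) the Steinberg character or the character of the permutation action on the natural geometry; these are $\Aut(S)$-invariant, rational or quadratic-rational, with indicator $+$, and their extendibility to $\Aut(S)$ is classical (the Steinberg character extends to $\Aut(S)$ for any simple group of Lie type, with indicator $+1$). For the $26$ sporadic groups, inspect \cite{atlas}: each has an $\Aut(S)$-invariant non-principal rational character with indicator $+$ extending to $\Aut(S)$.

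The main obstacle I anticipate is \textbf{not} finding a candidate $\phi$ — there are many — but rather giving a \emph{uniform} argument that some such $\phi$ simultaneously (a) is non-principal, (b) has indicator $+1$, (c) is $\Aut(S)$-stable, and (d) extends to $A$ with indicator still $+1$, without lapsing into a long CFSG case-check. The cleanest uniform route is via the Steinberg character $\St$ for Lie-type $S$ (which satisfies (a)–(d) by Steinberg's extension theorem and the fact that $\St$ is afforded by a representation over the rationals, hence has indicator $+1$), the standard $(n-1)$-dimensional character for alternating $S$, and a short table for the sporadic groups; the case $S = \PSL_2(q)$ with $q$ even or small may need the unipotent character of degree $q$ or a separate look since there $\St$ has degree $q$ and one should double-check its field of values and extension, but this is routine. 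I would therefore present the proof as: reduce to these three families by CFSG, invoke Steinberg's theorem and rationality of $\St$ for the Lie-type case, handle $\AAA_n$ via the deleted permutation module, and cite \cite{atlas} for the sporadic groups.
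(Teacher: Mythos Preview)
Your proposal is correct and follows essentially the same approach as the paper: a CFSG case split using the Steinberg character for Lie-type groups, the $(n-1)$-dimensional character for $\AAA_n$, and an \cite{atlas} check for the sporadic groups (and the Tits group). The paper's proof is more terse---it invokes \cite[Theorem~B]{F} directly for the statement that the Steinberg character extends to $\Aut(S)$ with indicator $+1$, treats $\AAA_6$ as $\PSL_2(9)$ rather than separately, and gives an explicit list of degrees for the sporadic cases---whereas your version contains some surplus general discussion (the $H^2$ material, the Gallagher/Schur-index heuristics for step (iii)) that is not needed once one has the right citations in hand.
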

\begin{proof}
  If $S$ is a simple group of Lie type, not isomorphic to $^2F_4(2)'$,   we let $\phi$ be the Steinberg character of $S$. It is  known (\cite[Theorem~B]{F}) that $\phi$ has an extension $\hat{\phi} \in \irrp{\aut S}$.
  
  For $S =  \AAA_n$,  $n \geq 5$, $n \neq 6$ ($\AAA_6 \cong {\rm PSL}_2(9)$) we consider the (unique) character
  $\phi \in \irr{\AAA_n}$ such that $\phi(1) = n-1$; then it is well known that $\phi$ has an extension $\hat{\phi} \in \irrp{\aut S}$.

  If  $S$ is  a  sporadic simple group or the Tits group, we choose the unique $\phi \in \irr S$ of degree $d$, where the pairs $(S,d)$ are as follows (see~\cite{atlas}):
  
$({\rm M}_{11},11 )$,  $({\rm M}_{12},45 )$,  $({\rm M}_{22},55 )$,  $({\rm M}_{23},22 )$,  $({\rm M}_{24},23 )$,   $({\rm J}_{1},209 )$,   $({\rm J}_{2},36)$,   
$({\rm J}_{3},324)$,
  $({\rm J}_{4},889111)$,  $({\rm Co}_{1}, 276)$,    $({\rm Co}_{2}, 23)$,    $({\rm Co}_{3},23 )$, $({\rm Fi}_{22},78 )$,
  $({\rm Fi}_{23},782 )$, $({\rm Fi}'_{24},8671 )$,
  $({\rm HS}, 22)$, $({\rm McL}, 22)$, $({\rm He}, 680)$,  $({\rm Ru}, 406)$,  $({\rm Suz},143 )$, $( {\rm O'N}, 10944)$,
  $({\rm HN}, 760)$,  $({\rm Ly},45694 )$,  $({\rm Th},248 )$, $({\rm B},4371 )$, $({\rm M}, 196883)$,
$({}^2{\rm F}_4(2)', 78)$.    
\end{proof}

For a finite set $X$ and an integer $1 \leq k \leq |X|$, we denote by  
$$X^{[k]} = \{ Y \subseteq X \mid |Y| = k \}$$
the set consisting of the  subsets of cardinality $k$  of $X$.
Given an action of a group $G$ on a set $X$ and a subset $Y \in X^{[k]}$, we denote by $G_Y = \{ g \in G \mid Y^g= Y \}$ 
the stabilizer of $Y$ in the natural action of $G$ on $X^{[k]}$.
  
The following result will be useful in dealing with some of the cases arising in the proof of Theorem~A. 

\begin{lem}\label{L1}
  Let $G$ be a finite group and let $M$ be a non-solvable minimal normal subgroup of $G$ such that $\cent GM = 1$. 
Writing $M = S_1 \times S_2 \times \cdots \times S_n$,  with $S_i$  isomorphic non-abelian simple groups,  
we consider the action of $G$  on $X =  \{1, 2, \ldots, n\}$ defined, for $g \in G$ and
$i \in X$, by
$$S_{i^g} = (S_i)^g \; .$$
If $G$ satisfies Hypothesis C, then

\begin{enumerate}
\item[\rm(a)] For every non-empty $Y \subseteq X$,   either
  $G_Y$ is a perfect group or $|G_Y: G_Y '| = 3$.   

\item[\rm(b)] 
 For every $1 \leq k \leq n$,   the orbits of $G$  on $X^{[k]}$ have distinct sizes.   
\end{enumerate}
\end{lem}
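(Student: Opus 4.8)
The plan is to exhibit each stabilizer $G_Y$ as the inertia group $I_G(\theta_Y)$ of a suitable real-valued $\theta_Y\in\irr M$, and then to extract both statements from the degrees and Frobenius--Schur indicators of the irreducible characters of $G$ lying over $\theta_Y$. Since $\cent GM=1$ we may regard $G\le\Aut(M)\cong\Aut(S)\wr\Sym(n)$, where $S\cong S_1$. By Lemma~\ref{trick1} fix a non-principal $\phi\in\irrp S$ admitting an extension $\hat\phi\in\irrp{\Aut(S)}$; thus $\phi$ is $\Aut(S)$-invariant and $\hat\phi$ is afforded over $\RR$. For $\emptyset\ne Y\subseteq X$ set $M_Y=\prod_{i\in Y}S_i$ and let $\theta_Y\in\irr M$ be $\prod_{i\in Y}\phi_i$ on $M_Y$ and trivial on $M_{X\setminus Y}$, where $\phi_i$ denotes $\phi$ on the factor $S_i$. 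Then $\theta_Y$ is real-valued and $\theta_Y(1)=\phi(1)^{|Y|}$ depends only on $|Y|$. Since each factor of $\theta_Y$ is $\phi$ or $1_S$, both $\Aut(S)$-invariant, any $g\in G$ sends $\theta_Y$ to $\theta_{Y^g}$; as $Y\mapsto\theta_Y$ is injective, this gives $I_G(\theta_Y)=G_Y$.

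Next I would show that $\theta_Y$ extends to some $\hat\theta_Y\in\irrp{G_Y}$. The conjugation action of $G_Y$ on $M_Y$ gives a homomorphism $\rho\colon G_Y\to\Aut(M_Y)\cong\Aut(S)\wr\Sym(Y)$ with kernel $D=\cent{G_Y}{M_Y}$; here $M_{X\setminus Y}\le D$ and $D\cap M_Y=1$, so $\rho(M_Y)\cong M_Y$. Realizing $\hat\phi$ over $\RR$ on a module $V$ and letting $\Aut(S)\wr\Sym(Y)$ act on $V^{\otimes|Y|}$ (the base group factorwise, $\Sym(Y)$ permuting the tensor factors) produces a real character $\xi$ of $\Aut(S)\wr\Sym(Y)$ restricting on the base group to $\prod_{i\in Y}\hat\phi_i$. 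Pulling $\xi$ back along $\rho$ gives $\hat\theta_Y=\xi\circ\rho\in\irr{G_Y}$ with $D\le\ker\hat\theta_Y$; one checks $\hat\theta_Y|_M=\theta_Y$ (using $\hat\phi|_{\Inn(S)}=\phi$ on $M_Y$ and $M_{X\setminus Y}\le D$), and $\hat\theta_Y$ is afforded over $\RR$ because $\xi$ is, so $\hat\theta_Y\in\irrp{G_Y}$. By the Clifford correspondence \cite[Theorem~6.11]{Is} and Gallagher's theorem \cite[Corollary~6.17]{Is}, $\irr{G\mid\theta_Y}=\{(\hat\theta_Y\beta)^G:\beta\in\irr{G_Y/M}\}$, the members are pairwise distinct, and $(\hat\theta_Y\beta)^G(1)=[G:G_Y]\phi(1)^{|Y|}\beta(1)$. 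Moreover $\overline{(\hat\theta_Y\beta)^G}=(\hat\theta_Y\bar\beta)^G$ since $\hat\theta_Y$ is real; and if $\beta$ is linear with $\beta^2=1$ then $\nu(\hat\theta_Y\beta)=\nu(\hat\theta_Y)=1$ (as $\beta(g^2)=1$ for all $g$), so $(\hat\theta_Y\beta)^G\in\irrp G$ because the induction of a real representation is real.

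For part (a), fix $\emptyset\ne Y\subseteq X$. As $M=M'\le G_Y'$, the linear characters of $G_Y/M$ are exactly the $|G_Y:G_Y'|$ characters of $G_Y/G_Y'$, and for all of these $\beta$ the character $(\hat\theta_Y\beta)^G$ has the one fixed degree $[G:G_Y]\phi(1)^{|Y|}$. Among them, the ones with $\beta^2=1$ lie in $\irrp G$, so by Hypothesis~C there is at most one such $\beta$; since for a linear $\beta$ one has $\beta^2=1\iff\beta=\bar\beta$, this forces $|G_Y:G_Y'|$ to be odd. The ones with $\beta\ne\bar\beta$ are non-real of that fixed degree, so by Hypothesis~C they are pairwise complex-conjugate and, being closed under conjugation, form a single conjugate pair; hence there are at most two such $\beta$. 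Therefore $|G_Y:G_Y'|\le 1+2=3$ and is odd, so $|G_Y:G_Y'|\in\{1,3\}$, which is exactly statement (a).

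For part (b), suppose for contradiction that $\mathcal O_1\ne\mathcal O_2$ are $G$-orbits on $X^{[k]}$ of equal size $s$, and choose $Y_j\in\mathcal O_j$. By part (a), $G_{Y_j}/G_{Y_j}'$ has odd order; the extensions of $\theta_{Y_j}$ to $G_{Y_j}$ form a torsor under the (odd-order) group of linear characters of $G_{Y_j}/M$, on which complex conjugation acts as an involution, so together with the construction above we may take $\hat\theta_{Y_j}\in\irrp{G_{Y_j}}$. Then $(\hat\theta_{Y_j})^G\in\irrp G$ has degree $[G:G_{Y_j}]\phi(1)^k=s\,\phi(1)^k$ for $j=1,2$. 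Since $Y'\mapsto\theta_{Y'}$ is injective and $G$-equivariant, the $G$-orbits of $\theta_{Y_1}$ and $\theta_{Y_2}$ in $\irr M$ are $\{\theta_{Y'}:Y'\in\mathcal O_1\}$ and $\{\theta_{Y'}:Y'\in\mathcal O_2\}$, which are disjoint, so $\irr{G\mid\theta_{Y_1}}\cap\irr{G\mid\theta_{Y_2}}=\emptyset$ and $(\hat\theta_{Y_1})^G\ne(\hat\theta_{Y_2})^G$; these are two distinct characters in $\irrp G$ of the same degree, contradicting Hypothesis~C, and part (b) follows. The step I expect to be the main obstacle is the extension in the second paragraph, specifically producing $\hat\theta_Y$ with Frobenius--Schur indicator $+1$: the contradiction in part (b) is driven entirely by the indicator of $(\hat\theta_Y)^G$, which is exactly why Lemma~\ref{trick1} is arranged to deliver $\hat\phi$ with indicator $+1$ rather than merely an extension of $\phi$. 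One should also check carefully that $\hat\theta_Y$ restricts to $\theta_Y$ on all of $M$, not just on $M_Y$, which is why $\ker\rho=\cent{G_Y}{M_Y}$ must be shown to contain $M_{X\setminus Y}$ and to meet $M_Y$ trivially.
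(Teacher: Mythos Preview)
Your proof is correct and follows essentially the same approach as the paper: build $\theta_Y$ from the $\Aut(S)$-invariant $\phi$ of Lemma~\ref{trick1}, identify $I_G(\theta_Y)=G_Y$, produce a real extension $\hat\theta_Y\in\irrp{G_Y}$, and then read off (a) and (b) from the Clifford/Gallagher correspondence together with Hypothesis~C. The only notable difference is in the extension step: the paper embeds $G$ in the ambient wreath product $W=\Aut(S)\wr\Sym(X)$, extends $\theta$ first to the base group and then to $W_Y$ via \cite[Lemma~25.5]{H}, and finally restricts to $G_Y$; you instead pass through the homomorphism $\rho\colon G_Y\to\Aut(M_Y)\cong\Aut(S)\wr\Sym(Y)$ and pull back the explicit real tensor representation on $V^{\otimes|Y|}$. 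These are two packagings of the same construction, and both deliver $\hat\theta_Y\in\irrp{G_Y}$ for the same reason (namely that $\hat\phi$ is realizable over $\RR$). Your computation $\nu(\hat\theta_Y\beta)=\nu(\hat\theta_Y)$ for $\beta^2=1$ is exactly what the paper obtains from \cite[Lemma~4.8]{Is}, and your counting in (a) (one real linear character, at most one conjugate pair of non-real ones) is equivalent to the paper's ``at most four characters of a given degree'' argument. The torsor remark in your proof of (b) is redundant given the explicit construction in paragraph two, but harmless.
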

\begin{proof}
  Let $X = \{1, 2, \ldots, n\}$,  $S = S_1$ and $A = \aut{S}$. 
  Since $\cent GM = 1$, $G$ is isomorphic to a subgroup of $\aut{M}$. Hence, we can identify $G$ with a subgroup of
   the wreath product 
   $W = A \wr \Sym(X)$ (\cite[3.3.20]{R}) and  
   the action of $W$ on $X$, defined  by  $S_{i^w} = (S_i)^w$ for $w \in W$ and $i \in X$, 
extends  the action of $G$ on $X$.
   As $M$ is a minimal normal subgroup of $G$, $G$ acts transitively on $X$ and hence 
    there exist elements  $x_1 = 1, x_2, \ldots, x_n \in G$  such that $S_i = S^{x_i}$.
   So, the base group of $W$ is  $B = A_1 \times A_2 \times \cdots \times A_n$, where $A_i = A^{x_i}$.
  
  By Lemma~\ref{trick1} there exists a  character $\phi \in \irrp{S}$ having an extension $\hat{\phi} \in \irrp{A}$.
  So, $\hat{\phi}_i = \hat{\phi}^{x_i} \in \irrp{A_i}$ extends  $\phi_i = \phi^{x_i} \in \irr{S_i}$, for every $i \in X$.

  Let  $Y$ be a non-empty subset of $X$. We consider  the character
  $$\theta =  \prod_{i \in X}\alpha_i\in \irr M \text{, \  where  } \alpha_i =
 \begin{cases}
   \phi_i & \text{ if } i \in Y \\
   1_{S_i} & \text{ if } i \in X \smallsetminus Y \, .
 \end{cases}
$$ 

Hence, the character 
 $$\psi  = \prod_{i \in X}\beta_i  \in \irr B \text{, \  where  } \beta_i =
 \begin{cases}
   \hat{\phi}_i & \text{ if } i \in Y \\
   1_{A_i} & \text{ if } i \in X \smallsetminus Y 
 \end{cases}
$$ 
is an extension of $\theta$ to $B$ and $\psi \in \irrp{B}$.

By \cite[Lemma 25.5(a)]{H}, the set stabilizer 
$W_Y$ coincides with the inertia subgroup $I_W(\psi)$.
Hence, as each character $\beta_i$ is afforded by a real representation of $A_i$,
the  argument used in the proof of  \cite[Lemma 25.5(b)]{H} shows that 
$\psi$ has an extension $\hat{\psi} \in \irrp{W_Y}$.

Since $\theta = \psi_M$, we have $W_Y = I_W(\psi) \leq I_W(\theta)$. On the other hand, if $w \in I_W(\theta)$, then $w$ normalizes
$\ker \theta = \prod_{i \in X \smallsetminus Y}S_i$ and hence $w \in W_{X \smallsetminus Y} =  W_Y$. Therefore, $W_Y = I_W(\theta)$ and
$$G_Y = G \cap W_Y = G \cap I_W(\theta) = I_G(\theta).$$

Let  $\hat{\theta} = \hat{\psi}_{ G_Y}$ be the restriction of $\hat{\psi}$ to $G_Y$. 
As $\hat{\theta}_M = \hat{\psi}_M = \theta \in \irr M$,   $\hat{\theta}$ is an irreducible character of $G_Y$ and,
since $\hat{\psi} \in \irrp{W_Y}$, we  deduce that $\hat{\theta} \in \irrp{G_Y}$. 
  
Now, if $|G_Y/G_Y '|$ is even, then there exists a linear character $\lambda$ of $G_Y$
such that $o(\lambda) = 2$. Hence, by Gallagher's theorem and \cite[Lemma 4.8]{Is}   $\hat{\theta}$ and $\lambda\hat{\theta}$
are distinct characters in $\irrp{G_Y}$, and  by Clifford correspondence $\hat{\theta}^G$ and $(\lambda\hat{\theta})^G$ are distinct
characters of the same degree in $\irrp{G}$, against Hypothesis C. 

On the other hand, if  $|G_Y/G_Y'|\geq 5$, then  Clifford correspondence yields at least  five characters of the same degree in $\irr{G|\hat{\theta}}$,   a contradiction again.
Hence, $|G_Y: G_Y'| \in \{ 1,3\}$ and part (a) is proved.

\medskip

So far, we have shown that for every non-empty subset $Y$ of $X$ there exists a character $\chi_Y \in \irrp{G}$  such that
\begin{equation}\label{deg}
  \chi_Y(1) = \phi(1)^{|Y|}|G:G_Y| .
\end{equation}
Moreover, the irreducible constituents of the restriction of $\chi_Y$ to $M$ have kernels of the form
$$\prod_{i\in X\smallsetminus Y^g}S_i$$
for some  $g\in G$. Hence, if $Y_1, Y_2 \in X^{[k]}$ ($1 \leq k \leq n$) are sets belonging to different $G$-orbits, then
$\chi_{Y_1} \neq \chi_{Y_2}$. 

As  $|G:G_Y|$ is the size  of the orbit of $Y \in X^{[k]}$ in the action of $G$ on $X^{[k]}$, 
 part (b) follows immediately from (\ref{deg}) and  Hypothesis C.  
\end{proof}

\begin{thm}\label{non-solv-hypB}
  If $G$ is a non-solvable group  and $G$  satisfies Hypothesis C, then
$G$ is one of the groups in the list $\mathcal{L}$ defined in \eqref{list1}.
\end{thm}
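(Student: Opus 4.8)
The plan is to reduce to the almost simple case and then invoke the classification results already in hand. Let $G$ be a non-solvable group satisfying Hypothesis C, and let $M$ be a minimal normal subgroup of $G$ that is non-solvable; write $N = \cent GM$. First I would show $N = 1$. Indeed, $N \cap M = \zent M = 1$ since $M$ is a direct product of non-abelian simple groups, so $M \cong MN/N$ acts faithfully on itself and $MN/N \cong M$; if $N \neq 1$ one produces, via Lemma~\ref{L1} applied inside $G/N$ (whose socle still contains the image of $M$) combined with extension of characters across the central-type-free direct factor $N$, too many irreducible characters of equal degree and indicator. More directly: $MN \le G$ with $MN/M \cong N$ abelian-by-(whatever) forces, by taking a chief factor of $G$ inside $N$ and inducing linear characters twisted by a fixed non-principal $\chi_Y \in \irrp G$ from Lemma~\ref{L1}, a violation of Hypothesis C. So $\cent GM = 1$, $G$ embeds in $\Aut(M)$, and $M$ is the unique minimal normal subgroup of $G$ (any other would centralize $M$).

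Next, writing $M = S_1 \times \cdots \times S_n$ with the $S_i$ isomorphic non-abelian simple groups and $G$ acting transitively on $X = \{1, \dots, n\}$, I would show $n = 1$. By Lemma~\ref{L1}(b) the orbits of $G$ on each $X^{[k]}$ have pairwise distinct sizes; in particular $G$ is transitive on $X^{[1]} = X$, and more strongly, for each $k$ the action on $X^{[k]}$ has a unique orbit of each occurring size. When $n \ge 2$, standard facts about transitive groups give, for suitable $k$, several orbits of the same size on $X^{[k]}$ (e.g. for $n = 2$ the only transitive group is $\Sym(2)$, which does have $n=2$; here I would instead use Lemma~\ref{L1}(a): the point stabilizer $G_{\{1\}}$ must be perfect or have $|G_{\{1\}} : G_{\{1\}}'| = 3$, and track how the abelianization of $\Aut(S)\wr \Sym(X)$-type subgroups behaves). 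The cleanest route: if $n \ge 2$ then $\Sym(X)$ or a transitive subgroup thereof must satisfy the distinct-orbit-sizes condition on all $X^{[k]}$ simultaneously, and a short combinatorial argument (comparing $|X^{[k]}|$ with the available distinct orbit sizes, and using that $A_n$, $S_n$ and small transitive groups fail) forces $n = 1$. Hence $G$ is almost simple with socle $S := S_1$.

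Now $G$ is almost simple satisfying Hypothesis C, so by Theorem~\ref{as}, $G$ is one of the groups in the list $\mathcal{L}$. This completes the proof. The main obstacle I anticipate is the case $n \ge 2$: ruling out all transitive permutation groups $G \le \Aut(S) \wr \Sym(X)$ whose induced action on $X$ has the property that every $X^{[k]}$ has distinct orbit sizes and every set-stabilizer is perfect-or-index-3. This is purely a statement about the permutation group $\bar G \le \Sym(X)$ together with the constraint on abelianizations of stabilizers; I would handle it by first bounding $n$ (for large $n$, binomial coefficients outgrow the number of conjugacy classes of subgroups that could serve as distinct-size orbit stabilizers), then checking the finitely many small $n$ by hand, at each stage exploiting Lemma~\ref{L1}(a) to kill groups whose stabilizers have even abelianization (which eliminates essentially everything with a transposition or a subgroup of index $2$). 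Everything else is bookkeeping with the results already established.
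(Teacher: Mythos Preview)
Your proposal has a genuine gap: you assume at the outset that $G$ possesses a non-solvable minimal normal subgroup $M$, but there is no reason this should be the case. A non-solvable group can perfectly well have every minimal normal subgroup elementary abelian. Concretely, take any group in $\mathcal{L}$, say ${\rm M}_{11}$, and any faithful irreducible $\FF_p{\rm M}_{11}$-module $V$; then $\tilde G = V \rtimes {\rm M}_{11}$ is non-solvable, its unique minimal normal subgroup is $V$, and your argument never touches this situation. The paper handles precisely this case in part (II) of its proof: by induction $G/M \in \mathcal{L}$, and then either $M$ is central (Schur cover analysis, checked case by case) or $M$ is a faithful irreducible module for $G/M$, whereupon the substantial Theorem~\ref{modules} is invoked to show no such $\tilde G$ satisfies Hypothesis~C. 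This last step is real work---regular-orbit counts, cohomology computations, explicit character tables---and cannot be bypassed.

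Even within the case you do treat, your reduction to $n=1$ is too optimistic. The paper does not argue abstractly that ``binomial coefficients outgrow the number of conjugacy classes of subgroups''; instead it splits according to whether $G/M$ is solvable (so $G/M$ lies in the short list $\mathcal{S}$ from Theorem~\ref{solvable}) or non-solvable (so by induction $G/M \in \mathcal{L}$), and in each case runs a finite but nontrivial check (Tables~\ref{T.1a} and~\ref{T.1b}) comparing $\binom{n}{k}$ against the admissible orbit sizes and using Lemma~\ref{L1}(a) to eliminate stabilizers with even abelianization. Your sketch of $\cent GM = 1$ is likewise not an argument: the paper proves it by taking a minimal normal $C_0 \leq \cent GM$, applying induction to $G/C_0$ to force $G/C_0 \in \mathcal{L}$, and then carefully ruling out each resulting direct-product or index-$3$ configuration by exhibiting too many characters of equal degree.
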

\begin{proof}
  We work by induction on $|G|$. 

  \medskip
{\bf (I)}~~We first assume that there exists a non-solvable minimal normal subgroup $M$ of $G$, and we start by showing that  $\cent GM = 1$.

  Suppose, working by contradiction, that  $C = \cent GM  \neq 1$.
  Let $C_0$ be a minimal normal subgroup of $G$, with $C_0 \leq C$.
  As $G/C_0$ is non-solvable and  satisfies Hypothesis~C,  by induction  $G/C_0$ is a  group in $\mathcal{L}$.
  In particular, $G/C_0$ is almost-simple, so  $C = C_0$ is minimal normal in $G$,  $G/C$ has socle $MC/C \cong M$ and  $|G:MC| \in \{1,3\}$.

  Assuming  $G =  M \times C$, then  $C \cong G/M$ satisfies Hypothesis~C.
  If $C$ is solvable,  then $C  \cong C_3$   and hence, as $M$  (by direct inspection of the socles of the groups in $\mathcal{L}$ or by~\cite[Lemma 1]{BCH}) has
  at least two irreducible characters with the same degree,
  $G$ has at least six irreducible characters with the same degree, a contradiction.
  Hence, both $M$ and $C$ are simple groups belonging to the list $\mathcal{L}$. We observe that $M$ and $C$ are non-isomorphic,
 as otherwise $G$ has  irreducible characters with the same degree and Frobenius--Schur indicator, but with different kernels, a contradiction. 
  Every simple group in $\mathcal{L}$, except for
  ${\rm SL}_3(2)$, ${\rm M }_{22}$, ${\rm McL }$ and ${\rm Th}$, has three irreducible characters with the same degree and,
  as $G$ cannot have six characters of the same degree, we deduce
  that $M$ and $C$ are (non-isomorphic) groups belonging to the sublist $\{ {\rm SL}_3(2), {\rm M }_{22}, {\rm McL }, {\rm Th} \}$. 
  Now, one checks that for each of the six possibilities for $G = M \times C$,  $\irr G$ contains four non-real characters with the same degree,
  against Hypothesis C.

  Assuming instead that  $|G:MC| = 3$, then either $M \cong{\rm PSL}_2(8)$ or  $ M \cong  {\rm O}_8^+(2)$.
Hence,  there are three  characters $\phi_1, \phi_2, \phi_3 \in \irr M$ with the same degree, that are transitively
  permuted by the outer automorphism group of order $3$ of $M$.
  So, for every $\theta\in \irr C$, $I_G(\phi_i \times \theta) = MC$ and hence $(\phi_i \times \theta)^G \in \irr G$.
  If $M \cong{\rm SL}_2(8)$,   $\phi_i(1) = 7$ and $\phi_i$ is non-real, while if $M \cong  {\rm O}_8^+(2)$,  $\phi_i(1) = 35$ and $\phi_i$ is real, for $i = 1,2,3$.
  We observe that  $G/M$ satisfies  Hypothesis~C and that  $CM/M$ is a minimal normal subgroup of index three of $G/M$.
  If  $C$ is solvable, then by Theorem~\ref{solvable}
  either $G/M \cong C_7 \rtimes C_3$ and $C \cong C_7$,  or $G/M \cong \AAA_4$ and $C \cong C_2 \times C_2$.
If $C$ is non-solvable, then  by induction  either $C \cong{\rm PSL}_2(8)$ or  $ C \cong  {\rm O}_8^+(2)$.
In any case, there are at least three characters $\theta_1, \theta_2, \theta_3  \in \irr C$ having the same degree and either all non-real or all real.
It follows that the characters $\phi_i \times \theta_j$, for $1 \leq i,j \leq 3$, are
either nine non-real, or nine real, irreducible characters with the same degree of $MC$.
As $I_G(\phi_i \times \theta_j) = MC$, recalling~\cite[Lemma 2.1]{IN} we conclude that $G$ has 
either at least three non-real, or at least three real,  irreducible characters  with the same degree, against Hypothesis C.

\medskip
So far, we have proved that $\cent GM = 1$ and hence  $M$ is the only minimal normal subgroup of $G$.
Writing 
$$M = S_1 \times \cdots \times S_n,$$ 
with $S_i \simeq S$ isomorphic non-abelian simple groups, we define
$$N = \bigcap_{i= 1}^n \norm G{S_i}, \mbox{ and }H = G/N.$$
So,   $H$  is a transitive  permutation group on $X = \{1, \ldots, n\}$.
  If $n =  1$, then $G$ is an almost simple group and  $G \in \mathcal{L}$ by Theorem~\ref{as}.
 We now assume, aiming at a contradiction, that $n \neq 1$. We observe that then, in particular,  $H \neq 1$.  We have two cases. 

  \medskip
  {\bf (I.a)}~~We first assume that  $G/M$ is solvable. 
Hence, for every non-empty subset $Y$ of $X$, by  part (a) of Lemma~\ref{L1}  either $G_Y = M$ or $[G_Y:G_Y'] = 3$.  
By Theorem~\ref{solvable},
both $G/M$ and $H = G/N$
belong to the list
$$\begin{array}{rl}
    \mathcal{S} = & \bigl\{ C_3, C_7 \rtimes C_3, (C_2\times C_2 \times C_2) \rtimes (C_7 \rtimes C_3),
                       \AAA_4, \\
                  & \SL{3}, (C_3\times C_3) \rtimes \SL{3}, (C_5\times C_5) \rtimes \SL{3}   \bigr\}.
  \end{array}$$

  We remark that, as $H_Y$ is a quotient of $G_Y$,  
\begin{equation}
  \label{e1}
  \text{for all \ } Y \subseteq X, Y \neq \emptyset, \text{ \ either \ }  H_Y = 1 \text{ \  or \ } |H_Y:H_Y'| = 3 \, .
\end{equation}

Let $L= H_{x}$, for some $x \in X$,  be a point stabilizer in $H$; so $n = |X| = |H:L|$.
We first show that $H$ does not act regularly on $X$, i.e. that $L \neq 1$.
To prove this, we assume that $H$ acts regularly on $X$  and we consider a subset $Y = \{x_1, x_2\}\in X^{[2]}$.
For $h \in H_Y$, then $h^2$ fixes $x_i$, for $i= 1,2$, so $h^2 = 1$. It follows that $H_Y$ is an elementary abelian group, and hence $H_Y = 1$ by~(\ref{e1}).
  Thus, $H$ has $(n-1)/2$ orbits of size $n$ on $X^{[2]}$ and part (b) of Lemma~\ref{L1} yields  $n = 3$ and $H \cong C_3$. 
  Moreover, as $G/M \in \mathcal{S}$ and  $G_Y/M = N/M$, 
    part (a) of Lemma~\ref{L1} implies that $N = M$. 
   In particular, for every $\alpha \in \irr S$, $\alpha \times \alpha \times \alpha \in \irr M$ has three extensions to
  $\irr G$. Since $S$ has at least two irreducible characters with the same degree by~\cite[Lemma 1]{BCH},
  then $G$ has at least six irreducible characters with the same degree, a contradiction. 

Hence, $L \neq 1$ and, as  $H$ acts faithfully on $X$, the normal core $L_H$  of $L$ in $H$ is trivial. 
Recalling that  $L < H$ since $n \neq 1$ and that   $|L/L'| = 3$ by~(\ref{e1}), we conclude that 

$$ |X| \in N(H) = \bigl\{|H:L| \mid 1 \neq L < H, |L/L'| = 3 \text{ and } L_H = 1\bigr\}.$$
In order to reduce the number of cases for the relevant  transitive actions of the groups in $\mathcal{S}$, we  introduce the
set
$$W(H) = \bigl\{|H:T| \mid T < H \text{ and } |T/T'| = 3 \} \cup \{|H|\bigr\}$$
and we claim that, for every   $0 < k < n$,  the size $|H:H_Y|$ of the $H$-orbit of $Y \in  X^{[k]}$ belongs to $W(H)$. 
In fact, $H_Y < H$ as $H$ is transitive on $X$ and $Y \neq X$, so the claim follows by~(\ref{e1}).

Thus, recalling that $n =|X| \in N(H)$,  part (b) of Lemma~\ref{L1} yields that, for every $k$ such that  $0 < k < n$,
\begin{equation}\label{e2}
   |X^{[k]}| = {n \choose k} \text{ is a  sum of distinct elements of } W(H) .  
\end{equation}

Since $H \not\cong C_3$ as $H$ acts non-regularly on $X$, the triples $(H, N(H), W(H))$ are listed in Table~\ref{T.1a}.

\begin{table} 
\begin{center}
\small
\[
\begin{array}{|l|r|r|r|r|r|r|} \hline
H   & C_7 \rtimes C_3  &  C_2^3 \rtimes (C_7 \rtimes C_3) & \AAA_4 &  \SL{3}   & C_3^2 \rtimes \SL{3} & C_5^2 \rtimes \SL{3} \\
\hline
  N(H) & \{7\} & \{8, 14, 56\} & \{ 4\}& \{8\} & \{ 9, 72\}& \{ 25, 200\}  \\
  \hline
    W(H) &\{ 7, 21\} & \{ 8, 14, 56, 168\} &\{ 4, 12\} & \{8, 24\} &\{ 9, 72, 216\} & \{25, 200, 600\} \\
                   \hline
\end{array}
\]
\caption{}
\label{T.1a}
\end{center}        
\end{table}

One easily checks that (\ref{e2}) is not satisfied for $k=3$ if $H\cong C_7 \rtimes C_3$, while it is not satisfied for $k=2$
in all other cases, a contradiction. 

    \bigskip
    {\bf (I.b)}~~We assume now that  $G/M$ is non-solvable.
    Since $G/M$ satisfies Hypothesis C,  by induction  $G/M \in \mathcal{L}$.
    We observe that $N/M$, being isomorphic to a subgroup of a direct product of outer automorphism groups of a simple group,
    is a solvable normal subgroup of $G/M$. Hence, $N = M$ and $H = G/M$.

  In order to reduce the number of cases for the action of $H$ on $X$, we consider the function
  $$b(|H|) = \max  \Bigl\{ a \in d(|H|) \mid \sum_{1 \neq d \in d(|H|)}d \geq {a \choose \lfloor\frac{a}{ 2} \rfloor } \Bigr\}$$
  where $d(|H|)$ is the set of the positive divisors of  $|H|$.
  By the orbit formula and part (b) of Lemma~\ref{L1}, setting $m = \lfloor\frac{|X|}{2} \rfloor$,  $|X^{[m]}|$  is a sum of
  distinct elements (orbit sizes)  $d_i \in d(|H|)$. Moreover, every $d_i \neq 1$, because $H$ has no fixed points in $X^{[m]}$
  since  $H$ is transitive on $X$ and $0 < m < |X|$.

  We deduce that $|X|  \leq b(|H|)$ and hence  $|X|$ belongs to the  set
  $$N_1(H) = \bigl\{ c \cdot |H:L| \mid c \in \mathbb{Z}_{+}, L \text{ is a maximal subgroup of } H \text{ and } c \cdot |H:L| \leq b(|H|) \bigr\}.$$

  We remark that, for $H \in \mathcal{L}$, both $b(|H|)$ and $N_1(H)$ can be computed using the information in~\cite{atlas} and that
  the set stabilizers $H_Y$ for the relevant actions of $H$ on $X^{[k]}$, for $|X| \in N_1(H)$ and $0<k<|X|$,  can be computed in~\cite{GAP}.
  In particular, we get that $N_1(H) = \emptyset$ for 
 $$H \in \bigl\{ {\rm SU}_3(3), {\rm McL}, {\rm Th}, {\rm O}_8^+(2).3 \bigr\}.$$
  For the remaining cases, we refer to Table~\ref{T.1b}.

\begin{table} 
\begin{center}
\small
\[
\begin{array}{|l|r|r|r|r|r|r|r|} \hline
H   &  \AAA_8 &  {\rm PSL}_3(2)    & {\rm M}_{11} &  {\rm M}_{22}   & {\rm M}_{23} & {\rm M}_{24} & {\rm PSL}_2(8).3 \\
\hline
  b(|H|) & 18 & 8 & 16& 22 & 28 & 32 & 14  \\
  \hline
    N_1(H) &\{ 8, 15\} & \{ 7,8\} &\{ 11, 12\} & \{22 \} & \{ 23\} &\{ 24\} & \{ 9 \}   \\
                   \hline
\end{array}
\]
\caption{}
\label{T.1b}
\end{center}        
\end{table}

For $(H, |X|) \in \{ ({\rm PSL}_3(2), 7), ({\rm M}_{11}, 11), ({\rm M}_{11}, 12)   \}$ and $Y = \{x \} \subseteq X$,
$H_Y$ has a quotient of order $2$, a contradiction by part (a) of Lemma~\ref{L1}.
In all other cases, for $Y \in X^{[2]}$, $H_Y$ has a quotient of order $2$, again a contradiction. 
  
  \bigskip
  ({\bf II.})
  We can now  assume   that $G$ has no non-solvable minimal normal subgroups.
  So,  $M$ is solvable and  $G/M$ is a non-solvable group that satisfies Hypothesis C.
  Hence, by induction $G/M$ is an almost-simple group belonging to the list  $\mathcal{L}$. Let $K/M$ be the socle of $G/M$.

  We observe that $M \leq K'$. In fact, $K'M/M = (K/M)' = K/M$, hence $K = K'M$,  and $K' \nor G$, so  if $M \not\leq K'$, then
  $M \cap K' = 1$ and $K' \cong K/M$ is a non-solvable minimal normal subgroup of $G$, a contradiction.   

  If $M < \cent GM$, then $M  \leq \zent K$ and hence $K$ is a quotient of the Schur cover of $K/M$.
If $G = K$, i.e.  $G/M$ is a simple group in $\mathcal{L}$,  by direct inspection using~\cite{atlas}, we see that all the groups arising in this way do not satisfy Hypothesis C.  
Since the Schur multiplier of ${\rm SL}_2(8)$ is trivial, the only case left is $G/M \cong {\rm O}_8^+(2).3 $.
Hence,  $|M| = 2^2$
(as the non-trivial elements of the Schur multiplier of ${\rm O}_8^+(2)$ are transitively permuted by the subgroup of order three of
its outer automorphism group) and $G$ has three real irreducible characters of degree $3024$, a contradiction.

Therefore, $M$ is the unique minimal normal subgroup of $G$ and it  is a faithful non-trivial irreducible $G/M$-module.
Hence, an application of  Theorem~\ref{modules} gives the final contradiction, concluding the proof.
  \end{proof}

Now Theorem~A follows immediately from Theorem~\ref{solvable} and Theorem~\ref{non-solv-hypB}.

\bibliographystyle{amsalpha}
\bibliography{bcdnt_revised}

\newcommand{\etalchar}[1]{$^{#1}$}
\providecommand{\bysame}{\leavevmode\hbox to3em{\hrulefill}\thinspace}
\providecommand{\MR}{\relax\ifhmode\unskip\space\fi MR }
\providecommand{\MRhref}[2]{%
  \href{http://www.ams.org/mathscinet-getitem?mr=#1}{#2}
}
\providecommand{\href}[2]{#2}
\begin{thebibliography}{FMOW19}

\bibitem[BCH92]{BCH}
Yakov Berkovich, David Chillag, and Marcel Herzog, \emph{Finite groups in which
  the degrees of the nonlinear irreducible characters are distinct}, Proc.
  Amer. Math. Soc. \textbf{115} (1992), no.~4, 955--959. \MR{1088438}

\bibitem[BCP97]{Magma}
Wieb Bosma, John Cannon, and Catherine Playoust, \emph{The {M}agma algebra
  system. {I}. {T}he user language}, J. Symbolic Comput. \textbf{24} (1997),
  no.~3-4, 235--265, Computational algebra and number theory (London, 1993).
  \MR{1484478}

\bibitem[BEOH23]{SmallGroup}
Hans~Ulrich Besche, Bettina Eick, Eamonn O'Brien, and Max Horn,
  \emph{{SmallGrp}, the \textsf{GAP} small groups library, {V}ersion 1.5.3},
  \href{https://gap-packages.github.io/smallgrp/}
  {\texttt{https://gap\texttt{\symbol{45}}packages.github.io/}\discretionary
  {}{}{}\texttt{smallgrp/}}, May 2023, \textsf{GAP} package.

\bibitem[Bre24]{CTblLib}
Thomas Breuer, \emph{{CTblLib}, the \textsf{GAP} character table library,
  {V}ersion 1.3.9},
  \href{https://www.math.rwth-aachen.de/~Thomas.Breuer/ctbllib}
  {\texttt{https://www.math.rwth\texttt{\symbol{45}}aachen.de/}\discretionary
  {}{}{}\texttt{\symbol{126}}Thomas.Breuer/}\discretionary
  {}{}{}\texttt{ctbllib}, Mar 2024, \textsf{GAP} package.

\bibitem[CCN{\etalchar{+}}85]{atlas}
John~H. Conway, Robert~T. Curtis, Simon~P. Norton, Richard~A. Parker, and
  Robert~A. Wilson, \emph{{$\mathbb{ATLAS}$} of finite groups}, Oxford
  University Press, Eynsham, 1985, Maximal subgroups and ordinary characters
  for simple groups, With computational assistance from J. G. Thackray.
  \MR{827219}

\bibitem[Fei93]{F}
Walter Feit, \emph{Extending {S}teinberg characters}, Linear algebraic groups
  and their representations ({L}os {A}ngeles, {CA}, 1992), Contemp. Math., vol.
  153, Amer. Math. Soc., Providence, RI, 1993, pp.~1--9. \MR{1247494}

\bibitem[FMOW19]{FMOW}
Joanna~B. Fawcett, J\"{u}rgen M\"{u}ller, Eamonn~A. O'Brien, and Robert~A.
  Wilson, \emph{Regular orbits of sporadic simple groups}, J. Algebra
  \textbf{522} (2019), 61--79. \MR{3893366}

\bibitem[FOS16]{FOS}
Joanna~B. Fawcett, Eamonn~A. O'Brien, and Jan Saxl, \emph{Regular orbits of
  symmetric and alternating groups}, J. Algebra \textbf{458} (2016), 21--52.
  \MR{3500766}

\bibitem[FS89]{FeitSeitz}
Walter Feit and Gary~M. Seitz, \emph{On finite rational groups and related
  topics}, Illinois J. Math. \textbf{33} (1989), no.~1, 103--131. \MR{974014}

\bibitem[GAP21]{GAP}
\emph{{\GAP} -- groups, algorithms, and programming, version 4.11.1},
  \href{https://www.gap-system.org}
  {\texttt{https://www.gap\texttt{\symbol{45}}system.org}}, 2021.

\bibitem[Hol23]{Cohomolo}
Derek Holt, \emph{{cohomolo}, cohomology groups of finite groups on finite
  modules, {V}ersion 1.6.11}, \href{https://gap-packages.github.io/cohomolo}
  {\texttt{https://gap\texttt{\symbol{45}}packages.github.io/}\discretionary
  {}{}{}\texttt{cohomolo}}, Jan 2023, GAP package.

\bibitem[Hul01]{Hulpke}
Alexander Hulpke, \emph{Representing subgroups of finitely presented groups by
  quotient subgroups}, Experimental Mathematics \textbf{10} (2001), no.~3,
  369--381.

\bibitem[Hup67]{HuI}
Bertram Huppert, \emph{Endliche {G}ruppen. {I}}, Die Grundlehren der
  mathematischen Wissenschaften, vol. Band 134, Springer-Verlag, Berlin-New
  York, 1967. \MR{224703}

\bibitem[Hup98]{H}
\bysame, \emph{Character theory of finite groups}, De Gruyter Expositions in
  Mathematics, vol.~25, Walter de Gruyter \& Co., Berlin, 1998. \MR{1645304}

\bibitem[IN12]{IN}
I.~Martin Isaacs and Gabriel Navarro, \emph{Groups whose real irreducible
  characters have degrees coprime to {$p$}}, J. Algebra \textbf{356} (2012),
  195--206. \MR{2891128}

\bibitem[Isa76]{Is}
I.~Martin Isaacs, \emph{Character theory of finite groups}, Pure and Applied
  Mathematics, No. 69, Academic Press [Harcourt Brace Jovanovich, Publishers],
  New York-London, 1976. \MR{460423}

\bibitem[Jan05]{Jansen}
Christoph Jansen, \emph{The minimal degrees of faithful representations of the
  sporadic simple groups and their covering groups}, LMS J. Comput. Math.
  \textbf{8} (2005), 122--144. \MR{2153793}

\bibitem[JK81]{JK}
Gordon James and Adalbert Kerber, \emph{The representation theory of the
  symmetric group}, Encyclopedia of Mathematics and its Applications, vol.~16,
  Addison-Wesley Publishing Co., Reading, MA, 1981, With a foreword by P. M.
  Cohn, With an introduction by Gilbert de B. Robinson. \MR{644144}

\bibitem[JLPW95]{ABC}
Christoph Jansen, Klaus Lux, Richard Parker, and Robert Wilson, \emph{An atlas
  of {B}rauer characters}, London Mathematical Society Monographs. New Series,
  vol.~11, The Clarendon Press, Oxford University Press, New York, 1995,
  Appendix 2 by T. Breuer and S. Norton, Oxford Science Publications.
  \MR{1367961}

\bibitem[MNP24]{TomLib}
Thomas Merkwitz, Liam Naughton, and G\"{o}tz Pfeiffer, \emph{{TomLib}, the
  {\GAP} library of tables of marks, {V}ersion 1.2.11},
  \href{https://gap-packages.github.io/tomlib}
  {\texttt{https://gap\texttt{\symbol{45}}packages.github.io/}\discretionary
  {}{}{}\texttt{tomlib}}, Jan 2024, GAP package.

\bibitem[OSC24]{OSCAR}
\emph{Oscar -- open source computer algebra research system, version 1.2.0},
  \href{https://www.oscar-system.org}
  {\texttt{https://www.oscar\texttt{\symbol{45}}system.org}}, 2024.

\bibitem[Rob93]{R}
Derek J.~S. Robinson, \emph{A course in the theory of groups}, Graduate Texts
  in Mathematics, vol.~80, Springer-Verlag, New York, 1993. \MR{1261639}

\bibitem[Ser77]{S}
Jean-Pierre Serre, \emph{Linear representations of finite groups}, Graduate
  Texts in Mathematics, Vol. 42, Springer-Verlag, New York-Heidelberg, 1977,
  Translated from the second French edition by Leonard L. Scott. \MR{450380}

\bibitem[Tre17]{Tr}
Stephen Trefethen, \emph{Non-abelian composition factors of {$m$}-rational
  groups}, J. Algebra \textbf{485} (2017), 288--309. \MR{3659336}

\bibitem[Wol78]{W}
Thomas~R. Wolf, \emph{Character correspondences in solvable groups}, Illinois
  J. Math. \textbf{22} (1978), no.~2, 327--340. \MR{498821}

\bibitem[WWT{\etalchar{+}}]{AGR}
Robert Wilson, Peter Walsh, Jonathan Tripp, Ibrahim Suleiman, Richard~A.
  Parker, Simon~P. Norton, Simon Nickerson, Steve Linton, John Bray, and Rachel
  Abbott, \emph{Atlas of finite group representations},
  \href{http://atlas.math.rwth-aachen.de/Atlas/v3}
  {\texttt{http://atlas.math.rwth\texttt{\symbol{45}}aachen.de/Atlas/v3}}.

\end{thebibliography}

\end{document}